\theoremstyle{definition}
\newtheorem{theorem}{Theorem}[section]
\newtheorem{corollary}[theorem]{Corollary}
\newtheorem{lemma}[theorem]{Lemma}
\newtheorem{proposition}[theorem]{Proposition}
\newtheorem{notation}[theorem]{Notation}
\newtheorem{observation}[theorem]{Observation}
\theoremstyle{definition}
\newtheorem{definition}[theorem]{Definition}
\newtheorem{example}[theorem]{Example}
\newtheorem{conjecture}[theorem]{Conjecture}
\newtheorem{remark}[theorem]{Remark}
\numberwithin{equation}{subsection}
\newcommand{\NN}{\mathbb{N}}
\newcommand{\m}{\mathfrak{m}}
\newcommand{\FDer}[1]{\stackrel{#1}{\to}}
\newcommand{\pd}{\operatorname{pd}}
\newcommand{\Shift}{\operatorname{Shift}}
\newcommand{\Reg}{\operatorname{reg}}
\newcommand{\Depth}{\operatorname{depth}}	
\newcommand{\Char}{\operatorname{char}}
\newcommand{\reg}{\operatorname{reg}}
\newcommand{\supp}{\mbox{\rm{supp}} }
\begin{document}
\newcommand{\tens}{\otimes}
\newcommand{\hhtest}[1]{\tau ( #1 )}
\renewcommand{\hom}[3]{\operatorname{Hom}_{#1} ( #2, #3 )}

\title{$b$--vectors of chordal graphs}
\author{Luis Pedro Montejano$^*$}
\address{CONACYT Research Fellow - Centro de Investigaci\'on en Matem\'aticas, Guanajuato, GTO, M\'exico}
\email{luis.montejano@cimat.mx}
\author{Luis N\'u\~nez-Betancourt$^{**}$}
\address{Centro de Investigaci\'on en Matem\'aticas, Guanajuato, GTO, M\'exico}  \email{luisnub@cimat.mx}
\thanks{$^*$ The first author was supported by the Mexican National Council on Science and Technology (C\'atedras-CONACYT)}
\thanks{$^{**}$ The second author was partially supported by NSF Grant 1502282.}
\maketitle

\begin{abstract}
The $b$--vector $(b_1,b_2\ldots,b_d)$ of a graph $G$ is defined in terms of its clique vector $(c_1,c_2\ldots,c_d)$ by the equation
$
\sum^d_{i=1}b_i(x+1)^{i-1}=\sum^d_{i=1} c_i x^{i-1},
$
where $d$ is the largest cardinality of a clique in $G$. We study  the relation of the $b$--vector of a chordal graph $G$ with some structural properties of $G$. In particular,
we show that the $b$-vector encodes different aspects of the connectivity and clique dominance of $G$.
Furthermore, we relate the $b$--vector with the Betti numbers of the Stanley-Reisner  ring associated to clique simplicial complex of $G$.
\end{abstract}

\setcounter{tocdepth}{1}
\tableofcontents

\section{Introduction}
In this manuscript we study  the $b$--vector
$(b_1,b_2\ldots,b_d)$ of a chordal graph $G$ where $d$ is the largest cardinality of a clique in $G$. We consider non-complete graphs since in this case we conclude that $b_i=1$ for every $i=1,\ldots,d$.
If $c_i$ denotes the number of cliques of $G$ with $i$ vertices, then
the clique vector is given by  $c(G)= (c_1,c_2\ldots,c_d)$.
We note that the $c$--vector is the $f$--vector of the clique complex of $G$ shifted by one.
This vector is a classical invariant  of a graph and has  been intensively studied \cite{Frohmader,FrohmaderKKThm,H08,Zykov}.

The $b$--vector of $G$ given by $\mathbf{b}(G)=(b_1,b_2\ldots,b_d)$,  is a more recent numerical invariant \cite{H08} defined by the equation
$$
\sum^d_{i=1}b_i(x+1)^{i-1}=\sum^d_{i=1} c_i x^{i-1.}
$$

Goodarzi \cite{Goodarzi} showed that the vertex connectivity of $G,$ denoted by $\kappa$, is encoded in the $b$--vector as $b_1=1$ for $i\leq \kappa$ and $b_{\kappa+1}\neq 1.$
We extend this theorem by studying the remaining $b_i$ and relate them to the number of connected components  of $G$, $W(G\setminus Y),$ after the deletion of a set $Y$, with $i-1$ vertices.

We also relate $b_i$ to clique  domination (see Definition \ref{DefDominance}). Using this notion, we define the number $d_i(G)$, which  measures how many $i$--cliques are necessary to dominate every maximal clique of order at least $i$ in $G$.
In order to establish the relation between the $b_i$ and $d_i(G)$,
we need to introduce  $\tilde{\kappa}(G)$, which  is defined as the maximum  cardinality of the intersection of any pair of maximal cliques in $G$.

 Using these combinatorial invariants of $G$, we can state the main result regarding the structure of a chordal graph encoded by its $b$-vector.

\begin{theorem}[{See Corollary \ref{b_i} and Theorem \ref{k+1}}]\label{MainThm}
Let $G$ be a a chordal graph with vertex connectivity $\kappa.$
Then,
\begin{itemize}
\item[(a)]  $b_{i}=\sum_{|Y|=i-1} (W(G-Y)-1)+1$ for $1\leq i\leq \kappa +1$, where $Y\subseteq V(G)$;
\item[(b)]  $b_{i}<\sum_{|Y|=i-1} (W(G-Y)-1)+1$ for $\kappa+2\leq i\leq d$, where $Y\subseteq V(G)$;
\item [(c)] $b_i\le d_i(G)$ for every $i=1,\ldots,d$;
\item [(d)] $b_i=d_i(G)$ for every $i>\widetilde{\kappa}(G)$;
\item [(e)] $b_i\le b_j$  for every $\widetilde{\kappa}(G)<j\leq i$.
\end{itemize}
\end{theorem}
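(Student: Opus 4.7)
The plan is to reduce all five statements to a single combinatorial identity for $b_i$ extracted from a perfect elimination ordering of $G$. Inverting the defining equation via the substitution $x=y-1$ yields
$$b_i \;=\; \sum_{j\geq i} (-1)^{j-i}\binom{j-1}{i-1} c_j.$$
Fix a perfect elimination ordering $v_1,\ldots,v_n$ of $G$ and let $f_k=|N(v_k)\cap\{v_{k+1},\ldots,v_n\}|$ be the forward degree of $v_k$. Since every clique of $G$ has a unique minimum-index vertex and is completed by a subset of that vertex's forward neighborhood, one has $c_j=\sum_k\binom{f_k}{j-1}$. Combining these expressions and applying $\sum_{t\geq 0}(-1)^t\binom{m}{t}=[m=0]$ gives the key identity
$$b_i \;=\; \#\{\,k : f_k = i-1\,\}.$$

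For parts (a) and (b), I would double-count the quantity $\sum_{|Y|=i-1}(W(G-Y)-1)+1$ by attaching, to each pair $(Y,C)$ with $C$ a connected component of $G-Y$, the minimum-index vertex of $C$. When $i-1\leq\kappa$, any separating set of size at most $\kappa$ is minimal and is necessarily realized as the forward neighborhood of a vertex with $f_k=i-1$; this sets up a bijection and proves equality in (a). When $i-1\geq\kappa+1$, separating sets $Y$ may contain redundant vertices and split $G$ into components whose minimum vertices have forward degree strictly less than $i-1$, producing extra pairs without matching forward-degree-$(i-1)$ vertices and yielding the strict inequality in (b). Making this ``separator = forward neighborhood'' correspondence precise is the main technical hurdle.

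For (c) and (d), each $v_k$ with $f_k=i-1$ determines an $i$-clique $K_k=\{v_k\}\cup(N(v_k)\cap\{v_{k+1},\ldots,v_n\})$, contained in some maximal clique $\widetilde M_k$ of order at least $i$. Given any family $\mathcal{F}$ of $i$-cliques dominating every maximal clique of order at least $i$, I would define $\phi(v_k)\in\mathcal{F}$ as an $i$-clique contained in $\widetilde M_k$. The elimination ordering forces $\phi$ to be injective: if $\phi(v_k)=\phi(v_{k'})$ with $k<k'$, the shared $i$-clique puts $v_{k'}$ into the forward neighborhood of $v_k$, and combining it with the other vertices of $\phi(v_{k'})$ produces a clique containing $v_k$ of size strictly greater than $i$, contradicting $f_k=i-1$. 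Hence $b_i\leq|\mathcal{F}|$, proving (c). For (d), the hypothesis $i>\widetilde\kappa(G)$ forces each $i$-clique to lie in at most one maximal clique (two maximal cliques share at most $\widetilde\kappa<i$ vertices), so any dominating family needs a distinct $i$-clique for each maximal clique of order at least $i$; the family $\{K_k : f_k=i-1\}$ realizes exactly such a dominating family of cardinality $b_i$.

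Part (e) is immediate from (d). For $\widetilde\kappa(G)<j\leq i$ one has $b_j=d_j(G)$ and $b_i=d_i(G)$. Take a minimum dominating family $\mathcal{F}_j$ of $j$-cliques; by $j>\widetilde\kappa$, each $K\in\mathcal{F}_j$ lies in a unique maximal clique, and if that maximal clique has order at least $i$ we replace $K$ by any $i$-subset of it, otherwise we discard $K$. The resulting family of $i$-cliques has cardinality at most $|\mathcal{F}_j|$ and dominates every maximal clique of order at least $i$, giving $d_i(G)\leq d_j(G)$ and hence $b_i\leq b_j$. The most delicate step of the whole argument remains the double-counting identity underlying (a) and (b); parts (c)--(e) are comparatively clean once the forward-degree formula for $b_i$ is in hand.
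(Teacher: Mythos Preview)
Your forward-degree identity $b_i=\#\{k:f_k=i-1\}$ is correct, and the overall strategy---a direct PEO argument avoiding Stanley--Reisner rings and shifting---is genuinely different from the paper's, which reduces (a)--(b) to threshold graphs via Betti-number invariance under shifting (Propositions~\ref{beta} and~\ref{PropPreservationBetta}) and reduces (c)--(d) to threshold graphs via the combinatorial shifting $\alpha_\sigma$ (Theorem~\ref{D_i}). Parts (d) and (e) of your sketch are essentially fine; in fact (d) can be established independently of (c) by checking that $k\mapsto\widetilde M_k$ is a bijection from $\{k:f_k=i-1\}$ onto the maximal cliques of order $\geq i$ when $i>\widetilde\kappa(G)$.

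The genuine gap is the injectivity argument in (c). From $\phi(v_k)=\phi(v_{k'})=F$ you only know $F\subseteq\widetilde M_k\cap\widetilde M_{k'}$; nothing forces $v_k$ or $v_{k'}$ to lie in $F$, nor $v_kv_{k'}$ to be an edge, so ``the shared $i$-clique puts $v_{k'}$ into the forward neighborhood of $v_k$'' is unjustified, and the subsequent ``combining'' step has no content without it. The paper's analogue (Theorem~\ref{D_i}(a)) goes in the opposite direction, building a \emph{surjection} $\mathcal{D}_i(G)\to\mathcal{D}_i(T)$, and already requires the carefully engineered PEO of Lemma~\ref{K_d}(b)--(d) just to be well-defined; an arbitrary PEO does not give enough control over how the cliques $K_k$ sit inside maximal cliques. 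Parts (a)--(b) are, as you acknowledge, only a plan: for $i\le\kappa$ there are no separating sets of size $i-1$ at all, yet you must still prove $b_i=1$ (the ``$+1$'' term), so the proposed ``separator $=$ forward neighborhood'' correspondence cannot be the whole story even in the range where you claim it gives a bijection.
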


A key component of parts of the previous theorem is the use of the Betti numbers of the Stanley-Reisner ring associated to $\Delta(G),$ the simplicial complex of cliques of $G$. This  differs from Goodarzi's approach, since he only looked at the projective dimension of this ring.
In particular, we  reinterpret parts of  Theorem \ref{MainThm} in terms of these Betti numbers using Proposition \ref{beta} and the fact that $I_{\Delta(G)}$ has a $2$-linear resolution when $G$ is chordal.
This is given by  a formula for  $\beta_{i}(R/I_{\Delta(G)})$  in terms of  the $b$--vector (see Proposition \ref{beta y b}).

\begin{theorem}
Let $G$ be a a chordal graph with $n$ vertices and vertex connectivity $\kappa$. Let $\beta_{i}(R/I_{\Delta(G)})$ be the $i$-th  Betti number of the Stanley-Reisner ring $R/I_{\Delta(G)}$ of $\Delta(G)$.
Then,
\begin{itemize}
\item[(a)] $b_{i}=\beta_{n-i}(R/I_{\Delta(G)})+1$ for every $i=1,\ldots,\kappa+1$;
\item [(b)]  $ b_i<\beta_{n-i}(R/I_{\Delta(G)})$ for every $j\ge \kappa+2$.
\end{itemize}
\end{theorem}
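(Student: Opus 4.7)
The plan is to give an independent combinatorial description of $\beta_{n-i}(R/I_{\Delta(G)})$ and then substitute into the formulas of Theorem \ref{MainThm}(a)(b). Both the Betti numbers and the $b_i$ turn out to be controlled by the same connectivity data $\sum_{|Y|=i-1}(W(G-Y)-1)$; Theorem \ref{MainThm} already handles the $b_i$ side, so the real work is producing the Betti-number side.

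The first step is to prove the identity
\[
\beta_{n-i}(R/I_{\Delta(G)}) = \sum_{|Y|=i-1}(W(G-Y) - 1), \qquad 1 \leq i \leq d.
\]
Fr\"oberg's theorem, which is the $2$-linear resolution fact explicitly cited in the excerpt, asserts that for chordal $G$ the ideal $I_{\Delta(G)}$ has a $2$-linear resolution, hence $\beta_j(R/I_{\Delta(G)}) = \beta_{j,j+1}(R/I_{\Delta(G)})$ for every $j \geq 1$. Hochster's formula then specializes to
\[
\beta_{j,j+1}(R/I_{\Delta(G)}) = \sum_{\substack{W \subseteq V(G)\\ |W| = j+1}} \dim_k \widetilde{H}_0\bigl((\Delta(G))_W;\, k\bigr),
\]
and since $(\Delta(G))_W$ is the clique complex of the induced subgraph $G[W]$, its $0$-th reduced Betti number equals $W(G[W]) - 1$. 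Writing $Y = V(G)\setminus W$ with $|Y| = i-1$ and $j = n - i$ produces the displayed identity; this is presumably essentially the content of Proposition \ref{beta}.

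Part (a) is then immediate from Theorem \ref{MainThm}(a): for $1 \leq i \leq \kappa + 1$,
\[
b_i = \sum_{|Y|=i-1}(W(G-Y)-1) + 1 = \beta_{n-i}(R/I_{\Delta(G)}) + 1.
\]
For part (b), Theorem \ref{MainThm}(b) gives $b_i < \sum_{|Y|=i-1}(W(G-Y)-1) + 1 = \beta_{n-i}(R/I_{\Delta(G)}) + 1$ for $i \geq \kappa + 2$, and integrality of both sides forces $b_i \leq \beta_{n-i}(R/I_{\Delta(G)})$.

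The delicate point, and where I expect the main obstacle to lie, is upgrading this weak inequality to the strict one $b_i < \beta_{n-i}(R/I_{\Delta(G)})$ asserted in (b). The extra unit of slack cannot be extracted from Theorem \ref{MainThm}(b) alone. I would look for it in one of two ways: either by sharpening Theorem \ref{MainThm}(b) above the connectivity threshold---exhibiting, for every $i\geq \kappa+2$, some separator $Y$ of size $i-1$ whose deletion produces enough extra components of $G-Y$ that the Hochster sum exceeds $b_i$ by at least two---or by a more refined Hochster-type analysis exploiting the rigidity of minimal vertex separators in chordal graphs (which are themselves cliques, hence behave uniformly under further vertex removal). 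I would attempt the former first, since it stays within the combinatorial framework that already powers Theorem \ref{MainThm}.
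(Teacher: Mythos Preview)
Your argument for part~(a) and your identification
\[
\beta_{n-i}(R/I_{\Delta(G)})=\sum_{|Y|=i-1}(W(G-Y)-1)
\]
via Fr\"oberg's $2$--linearity plus Hochster's formula is exactly the paper's approach; this is precisely Proposition~\ref{beta} together with the fact that $\beta_j=\beta_{j,j+1}$ for a $2$--linear resolution, and feeding it into Theorem~\ref{MainThm}(a) gives (a) verbatim, as in Theorem~\ref{k+1}.

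For part~(b) you correctly isolate the real issue: Theorem~\ref{MainThm}(b) by itself only yields $b_i<\beta_{n-i}+1$, i.e.\ $b_i\le\beta_{n-i}$, and one unit of slack is missing. The paper does \emph{not} recover this unit by a direct refinement of the chordal separator count as you propose. Instead, the strict inequality is established first for threshold graphs: Proposition~\ref{k+2} proves, for a threshold graph $T$, the sharper bound
\[
b_{i}(T)<\sum_{|Y|=i-1}(W(T-Y)-1)=\beta_{n-i}(R/I_{\Delta(T)})\qquad(\kappa+2\le i\le d),
\]
\emph{without} the $+1$. The transfer to an arbitrary chordal $G$ is then purely formal: taking $T=\alpha_\sigma(G)$, Theorem~\ref{samecliquevector} gives $b_i(G)=b_i(T)$, Corollary~\ref{corocombishift} gives $\Delta(T)=\Delta(G)^e$ and $\kappa(G)=\kappa(T)$, and Proposition~\ref{PropPreservationBetta} (shifting preserves graded Betti numbers when the resolution is linear) gives $\beta_{n-i}(R/I_{\Delta(T)})=\beta_{n-i}(R/I_{\Delta(G)})$. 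Chaining these,
\[
b_i(G)=b_i(T)<\beta_{n-i}(R/I_{\Delta(T)})=\beta_{n-i}(R/I_{\Delta(G)}).
\]
So the missing ingredient in your plan is precisely the shifting machinery: rather than sharpening the separator count on $G$ directly, one passes to the associated threshold graph where the stronger inequality of Proposition~\ref{k+2} is available, and pulls it back using the invariance of both the $b$--vector and the Betti numbers under the combinatorial shift $\alpha_\sigma$.
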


Another key component of the proof of Theorem \ref{MainThm} is given by algebraic shifting. In particular, we compare the exterior, $\Delta(G)^e$, and symmetric, $\Delta(G)^s$, shiftings of $\Delta(G).$
As a consequence, we prove that $I_{\Delta^e}$ and $I_{\Delta^s}$ have the same graded Betti numbers for every simplicial complex $\Delta$ such that $I_\Delta$ has a linear resolution.
This gives a partial case of a conjecture of Aramova, Herzog and Hibi \cite[Conjecture 2.3]{AHH}. Furtheremore, we also prove this conjecture  for every $t$-skeleton of $\Delta(G)$, which recover a result by Murai \cite{Murai}.

The final key component for the main result is given by an explicit description   a combinatorial shifting for chordal graphs (see Definition \ref{combishiftingraph}).

\section{Background}

\subsection{Graph terminology}

 In this manuscript we consider a simple graph $G=(V(G),E(G))$ with set of vertices $V(G)$ and  edges $E(G)$. We also assume that $G$ is not the complete graph.
We now recall some concepts from graph terminology.

The \emph{order}  of $G$   is $n=|V(G)|$.
The \emph{degree} of a vertex $v\in V(G)$, denoted as  $\deg(v)$,  is the number of vertices in $G$ adjacent to $v$ and $N(v)$ denotes the set of neighbors of $v$ in $G$. We say that $G$ is  \emph{connected} if there is a path between any two vertices of $G$.
A  \emph{vertex-cut} of  $G$ is a set of vertices whose removal disconnects $G$. Every graph that is not complete has a vertex-cut.
The \emph{vertex connectivity} $\kappa=\kappa(G)$ of a graph $G$ is the minimum  cardinality of a
vertex-cut  and a graph is $k$\emph{--connected} if $k\le \kappa(G)$.
Given a vertex-cut $Y$ of $G$, we denote as $W(G-Y)$ the number of connected components in the graph $G-Y$.

Since the $b$--vector is defined in terms of its clique vector, we recall all the concepts necessary for our study.
A \emph{clique} is a subset of vertices of a graph such that its induced subgraph is complete and a $i$--clique is a clique of order $i$.
The \emph{clique vector} $\mathbf{c}(G)$ of a graph $G$ is a vector $(c_1,c_2,\ldots,c_d)$ in $\mathbb{N}^d$, where $c_i$ is the number of cliques in $G$ with $i$ vertices and $d$ is the largest cardinality of a clique in $G$.
A \emph{maximum} clique of a graph $G$ is a clique such that there is no clique with more vertices and the \emph{clique number} is the number of vertices in a maximum clique of $G$.
A \emph{maximal clique}  in $G$ is a clique which is not contained in any other clique of $G$ and we denote the set of maximal cliques of size $i$ by  $\mathcal{C}_i(G)$  for every $1\le i\le d$.

\begin{definition}\label{DefDominance}
We say that a clique $C$ of $G$ \emph{dominates} a clique $C'$ if $C\subset C'$. A \emph{dominating $i$--clique} of $G$ is a set of cliques of order $i$ in $G$ that dominates all maximal cliques of order at least $i$ in $G$.
For every $1\le i\le d$, we take
$$
d_i(G)=\min\{ |\mathcal{D}| \;|\: \mathcal{D} \hbox{ is a dominating }i\hbox{-clique of } G    \}.
$$
We say that a dominating $i$--clique $\mathcal{D}$
is   minimum  if $|\mathcal{D}|=d_i(G)$.
\end{definition}

For a simplicial complex $\Delta$ on a set of vertices $V$ and $Y\subseteq V$, we denote as $\Delta_{|_Y}$ the simplicial subcomplex of $\Delta$ restricted in $Y$ and as $W(\Delta-Y)$ the number of connected components in $\Delta_{|_{V\setminus Y}}$.  The $t$--skeleton of $\Delta$ is given by $\Delta^{(t)}=\{\tau\in\Delta\;|\; \dim(\tau)\leq t\}$, in particular $\Delta(G)^{(1)}=G.$ The connectivity of a simplicial complex $\Delta$ is defined as the connectivity of its $1$--skeleton. The set of cliques in $G$ forms simplicial complex  $\Delta(G)$, known as the \emph{clique complex} of $G$. Then the well-known $f$\emph{-vector} of $\Delta(G)$ is exactly the clique vector of $G$.

A graph is \emph{chordal} if every cycle of length at least $4$ has a chord i.e., an edge that is not part of the cycle but connects two vertices of the cycle.
Given the clique vector $\mathbf{c}(G)$ of a chordal graph $G$, we define the vector $b$--vector of $G$, $\mathbf{b}(G)=(b_1,b_2,\ldots,b_d)$, defined as
\begin{equation}\label{eq1}
\sum\limits_{i=1}^{d}b_i(x+1)^{i-1}=\sum\limits_{i=1}^{d}c_ix^{i-1}.
\end{equation}

We seek to show that as $\mathbf{c}(G)$ give us the number the $i$--cliques of $G$,   $\mathbf{b}(G)$  also give us  structural information if $G$ is chordal.

\subsection{Free resolutions and Stanley-Reisner rings}
In this subsection we consider a polynomial ring $R=K[x_1,\ldots,x_n]$ as a $\NN^n$--graded ring with $\deg(x_i)=e_i$
where $e_i$ is the vector with one in the  $i$--th entry and zeros elsewhere.
We take $\m=(x_1,\ldots,x_n).$
Let $M$ be a graded finitely generated $R$--module.
Then, there exists a resolution by graded free modules
$$
0\to  F_p \FDer{\varphi_{i-1}} F_{i-1}\to \ldots \FDer{\varphi_0} F_o\to M\to 0,
$$
where $\varphi_i$ is represented by a matriz with homogebeous entries in $\m.$
We can  write $F_i = \bigoplus_j R(-j)^{\beta_{i,\alpha} (M)}$, where $R(-\alpha)$ denotes a rank one free module with an  homogeneous generator in degree $-\alpha$.
By The Hilbert Syzygy Theorem, $p\leq n$.
The projective dimension of $M$ is defined by $\pd(M)=p.$ By the Auslander-Buchsbaum formula, we have that
$\Depth(M)=n-\pd(M)$.
The numbers $\beta_{i,j}(M)$ are important invariants for $M$, with a vast number of applications in algebra, geometry, and topology.
The Castelnuovo-Mumford regularity is defined by $\reg(M)=\max\{j\;|\; \beta_{i,\alpha}(M) \neq 0\hbox{ with }|\alpha|= i+j\}$.

We say that $M$ has \emph{$t$--linear resolution} if every homogeneous minimal generator of $M$ has degree $t$ and $\beta_{i.i+j}(M)=0$ for $j\neq t$.

We refer Peeva's book on free resolutions \cite{PeevaGrSyz} and Eisenbud's book on syzygies \cite{EisenbudSyzygy}
for more details about free resolutions and their geometry.

An ideal generated by monomials is square free, if it is radical.
There is a well-known  bijection between the squarefree monomial ideals in $R$ and  simplicial complexes in $n$.

Given a simplicial complex $\Delta$, the square free monomial associated to $\Delta$ is defined by
$$I_{\Delta} = \left( x^\sigma\; |\;\sigma  \notin \Delta \right).$$
The quotient $R / I_{\Delta}$ is called the Stanley-Reisner ring of $\Delta$.

Given a square-free monomial ideal $I$,  the simplicial complex associated to $I$ is defined by
$$\Delta = \left\lbrace \sigma \subseteq [n] \; | \; x^\sigma \notin I \right\rbrace.$$

We refer to the book by Miller and Sturmfeels  on combinaturial commutative algebra \cite{MillerSturmfels} and the survey by Francisco, Mermin and Schweig on Stanley-Reisner theory \cite{SurveySR}  for more details about these rings.

Hochster's Formula \cite[Theorem 5.1]{HochsterFormula} is an important result that connects the topology of simplicial comlexes to free resolutions, which plays a key role in this manuscript.
Let $W=\supp(\alpha).$
Then,
$$\beta_{i+1,\alpha}(R/I_\Delta)=\beta_{i,\alpha}(I_\Delta)= \tilde{H}^{|W|-i-2}(\Delta_{|_W }; K)$$
for every $i\in\NN$ and $\alpha\in\NN^n.$

\section{Threshold Graphs}
In this section, we focus on the $b$--vector of threshold graphs. This is a key part of our strategy since the $b$--vector determines a unique threshold graph, and therefore, its graph properties. To expand our result to chordal graphs we use results from shifting theory in Subsection \ref{SubsectionShift}.

\subsection{Definition and basic properties}
A threshold graph is a graph that can be constructed from a one-vertex graph by the following two operations: addition of one isolated vertex, denoted by $D$,  and addition of a vertex connected to all other vertices, denoted by $S$. We set the convention of always start with an $S$--operator. Thus, there is a bijection between threshold graphs and words on the alphabet $\{S,D\}$ beginning with an $S$ reading from left to right.


\begin{figure}[htb]\label{FigExThreshold}
\begin{center}
 \includegraphics[width=.2\textwidth]{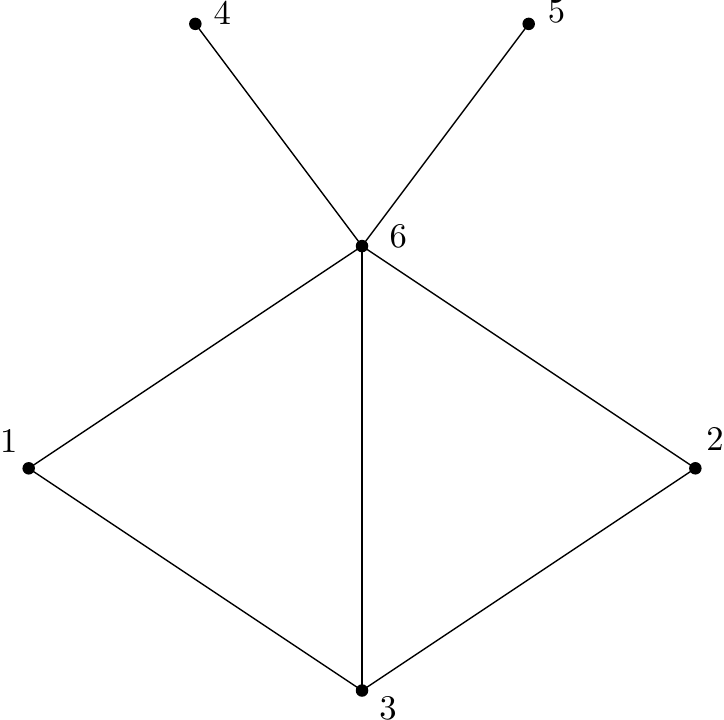}
\caption{Graph corresponding to the word SDSDDS }\label{threshold}
\end{center}
\end{figure}

We consider a vertical line before every $S$ in the word corresponding to the threshold graph $T$, thus breaking the word into subwords.
Goodarzi \cite{Goodarzi} showed that the entry
 $b_i$ of the $b$--vector  $\mathbf{b}(G)$ is the length of the $(d-i+1)$--th subword for every $i=1,\ldots,d$. As a consequence, there is a bijection between $b$--vectors and threshold graphs.
 For instance, the graph in Figure \ref{FigExThreshold}, which has the word $SDSDDS$,
 breaks to $|SD|SDD|S$ and $(b_1,b_2,b_3)=(1,3,2)$.


Given a threshold graph $T$, we recall that $\mathcal{C}_i(T)$ is the set of maximal cliques of size $i$ in $G$ and we denote as  $C_i^s$ the $i$--clique of $T$  conformed by all vertices corresponding to all the letters $S$ in the  $(d-j+1)$--th subwords, $1\le j\le i$.
Many properties of a threshold graph can be read off from its word. We collect some simple observations.

\begin{observation}\label{obs}
Let $T$ be a threshold graph with clique number $d$, then the following hold:
\begin{itemize}
  \item [(a)] The number of times that $S$ appears in $T$ is the clique number of $T$.
  \item [(b)] The connectivity of $T$ is the number of consecutive $S$ appearing at the end of the word $T$.
  \item [(c)]   There is only one minimum vertex-cut in $T$.
  \item [(d)] For any vertex-cut $Y$ of $T$, the graph $T-Y$ has at most one component with at least $2$ vertices.
  \item [(e)] $\mathcal{C}_i(T)\cup C_i^s$ is the unique  minimum  dominating $i$--clique of $T$ for every $1\le i\le d$.
\end{itemize}
\end{observation}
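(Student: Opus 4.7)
The strategy is to read each property directly off the word defining the threshold graph $T$, exploiting that an $S$-operation adds a vertex adjacent to every existing vertex while a $D$-operation adds an isolated vertex. A preliminary step is to classify the maximal cliques of $T$. Since distinct $D$-vertices are never adjacent, every clique contains at most one $D$-vertex; combined with the fact that $S$-vertices are mutually adjacent and each $D_j$ is adjacent to precisely the $S$-vertices appearing after position $j$ in the word, the maximal cliques of $T$ are exactly $C_0=\{S_1,\ldots,S_d\}$, together with $C_j=\{D_j\}\cup\{S_m:m>j\}$ for each $D$-vertex $D_j$. Because the word begins with $S$, each $|C_j|\leq d$, so the clique number equals $d$, the total number of $S$-letters; this proves (a).

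For (b) and (c), observe that the $k$ trailing $S$-vertices are universal in $T$: each is adjacent to every earlier vertex by the $S$-rule and no vertices follow them in the word. If a vertex-cut $Y$ omitted any such vertex, that vertex would remain universal in $T-Y$, forcing connectedness; hence every vertex-cut contains all $k$ trailing $S$-vertices, and $|Y|\geq k$. Conversely, removing the trailing $S$-vertices leaves a threshold graph whose word ends in $D$, isolating the terminal $D$-vertex in the subgraph; since $T$ is non-complete, this subgraph has at least two vertices and is therefore disconnected. So $\kappa=k$, and any vertex-cut of size $\kappa$ must coincide with the set of trailing $S$-vertices, yielding (c).

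Part (d) follows from the hereditary nature of threshold graphs: $T-Y$ is again a threshold graph. By the recursive characterization that every threshold graph is obtained from the empty graph by successive additions of isolated or universal vertices, a disconnected threshold graph on at least two vertices must possess an isolated vertex (a universal vertex would force connectedness). Peeling off isolated vertices inductively shows that $T-Y$ decomposes as a disjoint union of isolated vertices with at most one connected non-trivial threshold component.

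For (e), each maximal $i$-clique $M\in\mathcal{C}_i(T)$ has only itself as an $i$-subclique, so any dominating $i$-clique must contain $\mathcal{C}_i(T)$. Using the classification in (a), every maximal clique of size strictly greater than $i$ is either $C_0$ or some $C_j$ in which at least $i$ letters $S$ appear after position $j$, and a direct verification shows $C_i^s=\{S_{d-i+1},\ldots,S_d\}$ is contained in every such maximal clique. Hence $\{C_i^s\}$ by itself dominates all maximal cliques of size exceeding $i$, showing that $\mathcal{C}_i(T)\cup\{C_i^s\}$ is a dominating $i$-clique of minimum cardinality. The main obstacle is the uniqueness assertion: this requires analysing the common intersection of all maximal cliques of size greater than $i$ and showing that the canonical $i$-subclique dictated by the trailing subwords of $T$ is the only one consistent with minimality. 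I expect this to follow from a finer inspection of the subword structure of $T$ near its trailing block, perhaps together with a strengthened notion of minimality adapted to the rigid combinatorics of threshold graphs.
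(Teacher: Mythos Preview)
The paper records this as an observation and gives no proof, so there is nothing to compare your argument against directly. Your proofs of (a)--(d) are correct and are precisely the natural reading of the word: the classification of maximal cliques via the single-$D$ constraint, the universality of the trailing $S$-vertices for (b)--(c), and the hereditary property of threshold graphs (via the forbidden-subgraph characterization) for (d) are exactly the expected arguments.

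For (e) your argument establishes that $\mathcal{C}_i(T)\cup\{C_i^s\}$ is a minimum dominating $i$-clique, and this is in fact the only part of (e) that the paper actually uses later (in the proofs of Proposition~\ref{dominate}(c) and Theorem~\ref{D_i}, only the value of $d_i(T)$ matters). Your hesitation about the uniqueness clause is justified: as literally stated, uniqueness is false. For example, take $T$ with word $SDDSS$, so that $\kappa(T)=2$ and every maximal clique contains both trailing $S$-vertices; for $i=1$ one has $\mathcal{C}_1(T)=\emptyset$, and each of the two trailing $S$-vertices by itself is a minimum dominating $1$-clique. More generally, whenever $i<\kappa(T)$ any $i$-subset of the $\kappa$ universal vertices yields a minimum dominating $i$-clique. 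So rather than searching for a ``finer inspection'' to rescue uniqueness, you should simply note that only minimality is required downstream; the overstatement in (e) does not affect the rest of the paper.
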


We point out that there are others characterization of threshold graphs. We recall one that is useful in our study.
\begin{theorem}[\cite{MahadevPeled}]\label{Mahadev}
A graph is threshold if and only if it does not contains the graphs $G_1$, $G_2$ or $G_3$ of Figure \ref{G_1G_2G_3}
as an induced subgraph.
\end{theorem}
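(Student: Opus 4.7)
The plan is to prove both directions by induction, using the recursive structure of threshold graphs together with a local analysis of the three forbidden induced subgraphs $G_1 = 2K_2$, $G_2 = C_4$, $G_3 = P_4$.

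For the \emph{only if} direction, I would induct on the length of the word that defines $G$. Suppose $G$ arises from a smaller threshold graph $G'$ (which by induction contains no induced $G_1$, $G_2$ or $G_3$) by appending either a $D$ letter (a new isolated vertex $v$) or an $S$ letter (a new dominating vertex $v$). Any induced copy of $G_i$ in $G$ whose vertex set avoids $v$ already appears in $G'$, contradicting the inductive hypothesis; so it suffices to rule out $v$ itself belonging to a forbidden subgraph. In the $D$-case, every vertex of each $G_i$ has positive degree inside the subgraph, so the isolated vertex $v$ cannot be among them. In the $S$-case, every vertex of each $G_i$ has at least one non-neighbor inside the subgraph, so the dominating vertex $v$ cannot be among them either.

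For the \emph{if} direction, I would show that any $\{G_1,G_2,G_3\}$-free graph on at least two vertices contains either an isolated vertex or a vertex adjacent to all the others; removing such a vertex preserves the forbidden-subgraph hypothesis, so induction yields a word for $G-v$ and appending the appropriate letter recovers one for $G$. The disconnected case is easy: if $G$ has no isolated vertex then every component contains an edge, and picking one edge from each of two different components produces an induced $G_1 = 2K_2$, a contradiction. So the real work is the connected case. Assume $G$ is connected and let $v$ be a vertex of maximum degree; I claim $v$ is dominating. Otherwise pick a non-neighbor $w$ of $v$: a shortest $v$--$w$ path of length $\geq 3$ would induce $G_3 = P_4$, so $v$ and $w$ have a common neighbor $u$. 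I then claim $uz \in E(G)$ for every $z \in N(v)\setminus\{u\}$, by analyzing the induced subgraph on $\{v,z,u,w\}$: if $zw \notin E$ the edges $zv, vu, uw$ form the induced path $P_4$ on $z$--$v$--$u$--$w$, while if $zw \in E$ the edges $vz, zw, wu, uv$ form the induced $4$-cycle $v$--$z$--$w$--$u$--$v$, both impossible. Hence $N(u)$ contains $N(v)\setminus\{u\}$ together with the two extra vertices $v$ and $w$ (neither of which lies in $N(v)$), so $\deg(u) \geq (\deg(v)-1) + 2 = \deg(v)+1$, contradicting maximality of $\deg(v)$.

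The main obstacle I anticipate is precisely this connected case: one must arrange the local argument so that $\deg(u)$ strictly exceeds $\deg(v)$, which depends on gaining the two extra neighbors $v$ and $w$ of $u$ beyond $N(v)\setminus\{u\}$ and simultaneously invoking the $P_4$-versus-$C_4$ dichotomy to force $u$ adjacent to every other neighbor of $v$. Once this domination-propagation lemma is in hand, the remaining pieces --- the $2K_2$ obstruction in the disconnected case and translating the recursive peeling into a word starting with $S$ (the single-vertex base is $S$ by convention) --- are routine bookkeeping.
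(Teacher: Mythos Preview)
The paper does not prove this theorem at all: it is quoted as a known characterization from \cite{MahadevPeled} and used as a black box (for instance in the proof of Theorem~\ref{samecliquevector}). So there is no ``paper's own proof'' to compare against.

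That said, your proposed argument is correct and is essentially the standard proof. The forward direction is immediate from the observation that in each of $2K_2$, $C_4$, $P_4$ every vertex has both a neighbor and a non-neighbor inside the subgraph, so neither an isolated nor a dominating vertex can participate. For the converse, your degree-propagation argument is the right one: the case split on $\{v,z,u,w\}$ producing either an induced $P_4$ or an induced $C_4$ forces $N(v)\setminus\{u\}\subseteq N(u)$, and together with $v,w\in N(u)\setminus N(v)$ this gives the strict inequality $\deg(u)>\deg(v)$ contradicting maximality. The only point worth stating a bit more explicitly is why the shortest $v$--$w$ path has length exactly~$2$: you implicitly use that the first four vertices of any shortest path of length $\geq 3$ form an \emph{induced} $P_4$ (no chords, by minimality), which is true but deserves one line. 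The disconnected case and the recursive peeling into a word are, as you say, routine.
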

\begin{figure}[htb]
\begin{center}
 \includegraphics[width=.4\textwidth]{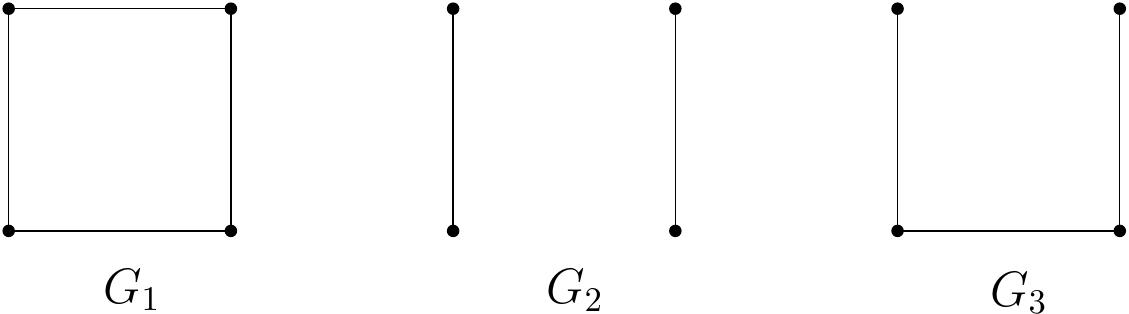}
\caption{ }\label{G_1G_2G_3}
\end{center}
\end{figure}

\subsection{$b$--vectors of threshold graphs}

We start this section with a result that establishes a strong relation between the $b$--vector and the graphs structure.

\begin{proposition}\label{dominate}
Let $T$ be a threshold graph  with  vertex connectivity $\kappa$. 
 Then,
\begin{itemize}
\item [(a)]  $b_1=b_2=\ldots=b_{\kappa}=1$;
 \item [(b)] $b_i-1=|\mathcal{C}_i(T)|$ for every $\kappa+1\le i\le d-1$ and $b_d=|\mathcal{C}_d(T)|$;
 \item [(c)] $b_i=d_i(T)$ for every $i=1,\ldots,d$;
 \item [(d)] $b_{\kappa+1}=W(T-Y)$ for a minimum vertex-cut $Y$.
 \end{itemize}
\end{proposition}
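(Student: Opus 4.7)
My plan is to base everything on Goodarzi's identification of $b_i$ with the length of the $(d-i+1)$-th subword (from the left) of the word defining $T$, combined with the classification of maximal cliques in a threshold graph: since two vertices are adjacent iff the later one is labelled $S$, the maximal cliques of $T$ are the all-$S$ clique (of size $d$) together with, for each $D$-vertex $v$, the clique $\{v\}\cup\{\text{$S$-vertices after }v\}$. Two consequences are used repeatedly: (i) a $D$-vertex lying in the $j$-th subword from the right sits in a unique maximal clique, whose size equals $j$, so these $D$'s biject with a subset of $\mathcal{C}_j(T)$; and (ii) every maximal clique of size at least $i$ contains the last $i$ $S$-vertices, i.e., contains $C_i^s$.

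Parts (a) and (b) follow directly. Since $\kappa$ equals the length of the trailing block of $S$'s by Observation \ref{obs}(b), the last $\kappa$ subwords each consist of a single $S$, yielding $b_i=1$ for $i\le\kappa$. For $\kappa+1\le i\le d-1$, the $(d-i+1)$-th subword has the form $SD^m$ with $m=b_i-1$, and by (i) its $D$'s are in bijection with $\mathcal{C}_i(T)$, giving $b_i-1=|\mathcal{C}_i(T)|$. The $i=d$ case is handled separately: the first subword contributes $b_d-1$ maximal $d$-cliques through its $D$'s, and the all-$S$ clique supplies the remaining one, so $b_d=|\mathcal{C}_d(T)|$.

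For (c), consequence (ii) shows that $\mathcal{C}_i(T)\cup\{C_i^s\}$ is a dominating $i$-clique, and no smaller family works because distinct maximal $i$-cliques can only dominate themselves and $C_i^s$ is needed to cover the maximal cliques of size strictly greater than $i$. A case split finishes the count: for $i\le\kappa$ there are no maximal $i$-cliques (each maximal clique has size at least $\kappa+1$), so $d_i(T)=1=b_i$; for $\kappa+1\le i\le d-1$, $C_i^s$ is not maximal (any earlier $S$ extends it), giving $d_i(T)=|\mathcal{C}_i(T)|+1=b_i$; for $i=d$, $C_d^s$ coincides with the all-$S$ clique and already lies in $\mathcal{C}_d(T)$, giving $d_d(T)=|\mathcal{C}_d(T)|=b_d$.

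For (d), I will identify the unique minimum vertex-cut provided by Observation \ref{obs}(c) as the set $Y_0$ consisting of the $S$-vertices in the trailing $S$-block of length $\kappa$. These are precisely the vertices of $T$ adjacent to every other vertex, so any vertex-cut must contain all of them---otherwise a surviving dominating vertex would keep the graph connected---and the size constraint $|Y_0|=\kappa$ forces $Y_0$ to be the whole cut. Then $T-Y_0$ is the threshold graph defined by the word $W_0 SD^m$ (the prefix of the word of $T$ in the notation of (b) applied at $i=\kappa+1$): its $m$ trailing $D$'s are isolated, while the prefix $W_0 S$ ends in $S$ and so is connected, giving $W(T-Y_0)=1+m=b_{\kappa+1}$. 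The main obstacle I foresee is keeping the $i=d$ edge case straight throughout (b) and (c), where $C_d^s$ is itself a maximal clique and the otherwise uniform ``plus one'' vanishes; I will isolate it as an explicit sub-case rather than try to fold it into the generic argument.
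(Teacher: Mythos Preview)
Your proposal is correct and follows essentially the same route as the paper: both arguments rest on Goodarzi's identification of $b_i$ with the length of the $(d-i+1)$-th subword, the description of maximal cliques of $T$ as the all-$S$ clique together with $\{v\}\cup\{\text{$S$'s after }v\}$ for each $D$-vertex $v$, and the identification of the minimum vertex-cut with the trailing block of $S$'s. The only difference is that you spell out the minimality argument in (c) and the uniqueness of the vertex-cut in (d) directly, whereas the paper simply invokes Observation~\ref{obs}(c) and~(e); your handling of the $i=d$ edge case is also the same as the paper's.
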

\begin{proof}$ $
\begin{itemize}
\item[(a)] It follows from Observation \ref{obs}(b).

\item[(b)]
Let $\kappa+1\le i\le d-1$.
Then, each maximal clique of size $i$ in $T$ is forming by a vertex in $T$ corresponding to a letter $D$ in the $(d-i+1)$--th subword together with the vertices in $T$  corresponding to all the letters $S$ in the  $(d-j+1)$--th subwords for every $1\le j\le i-1$.
Since the $(d-i+1)$--th subword has exactly one $S$ and $b_i$ corresponds to the cardinality of letters of its $(d-i+1)$--th subword, we obtain that $b_i-1=|\mathcal{C}_i(T)|$.
Finally, if we put the letter $D$ instead $S$ in the first position of the word corresponding to $T$, we  obtain the same graph $T$. Hence, the previous argument holds for $i=d$ obtaining that $b_d=|\mathcal{C}_d(T)|$.

\item[(c)] We note that $\mathcal{C}_i(T)\cup C_i^s$ is the unique  minimum  dominating $i$--clique of $T$ by Observation \ref{obs}(e). If $1\le i\le \kappa$, then the set $\mathcal{C}_i(T)=\emptyset$ and the clique $C_i^s$ is a minimum  dominating $i$--clique of $T$ proving that $d_i(T)=1$ for every $1\le i\le \kappa$. We conclude that $b_i=d_i(T)=1$ for every $1\le i\le \kappa$  by (a).
If $\kappa+1\le i\le d$, we have that $b_i-1=|\mathcal{C}_i(T)|$ for every $\kappa+1\le i\le d-1$ and $b_d=|\mathcal{C}_d(T)|$ by (b).
Since  $\mathcal{C}_i(T)\cup C_i^s$ is a minimum  dominating $i$--clique in $T$  by Observation \ref{obs}(e), it follows that $b_i=d_i(T)$ for every $\kappa+1\le i\le d$, because  $i=d$ implies $C_i^s\in \mathcal{C}_i(T)$.

\item[(d)] By Observation \ref{obs}(b)(c), there is only one minimum vertex-cut $Y$ in $T$, which is formed by the vertices of $T$ corresponding to all consecutive $S$ appearing at the end of the word corresponding to $T$. Therefore,  $b_{\kappa+1}$ is the number of components in $T-Y$.
\end{itemize}
\end{proof}

One of the main objectives of this paper is to extend the previous proposition to chordal graphs and  to compare the $b$--vector of a chordal graph $G$ with the Betti numbers of $\Delta(G)$.
We end this subsection with a result that further the relation of the  $b$--vector with connectivity.


\begin{proposition}\label{k+2}
Let $T$ be a threshold graph with vertex connectivity $\kappa$. 
  Then,
$$
    b_{i+1}<\sum\limits_{\substack{|Y|=i}} (W(T-Y)-1)
  $$ for every $\kappa+1\le i\le d-1$ where $Y\subseteq V(T)$.
\end{proposition}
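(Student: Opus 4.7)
The plan is to exhibit two explicit subsets $Y_i$ and $Y''$ of $V(T)$, each of cardinality $i$, whose combined contributions to the sum $\sum_{|Y|=i}(W(T-Y)-1)$ already exceed $b_{i+1}$. Write $S^{(k)}$ for the $S$-vertex beginning the $k$-th subword of the word associated to $T$. For $\kappa\le j\le d-1$, set
\[
Y_j := \{S^{(d-j+1)},\,S^{(d-j+2)},\,\ldots,\,S^{(d)}\},
\]
so that $|Y_j|=j$ and $Y_\kappa = Y_{\min}$ is the unique minimum vertex-cut of $T$ by Observation~\ref{obs}(b)(c). Define also
\[
Y'' := (Y_i\setminus\{S^{(d-i+1)}\})\cup\{S^{(d-i)}\} = \{S^{(d-i)},\,S^{(d-i+2)},\,S^{(d-i+3)},\,\ldots,\,S^{(d)}\},
\]
which is distinct from $Y_i$, has cardinality $i$, and is well defined because $d-i\ge 1$ in the stated range of $i$.

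The heart of the argument is the following pair of closed-form identities:
\[
W(T-Y_i)-1=\sum_{m=\kappa+1}^{i+1}(b_m-1), \qquad W(T-Y'')-1=\sum_{m=\kappa+1}^{i}(b_m-1).
\]
Both follow from the same combinatorial principle: in a threshold graph, a $D$-vertex lying in the $k$-th subword is adjacent exactly to the $S$-vertices of strictly later subwords. Hence, after deleting $Y_i$ (respectively $Y''$), the $D$-vertices sitting in subwords $d-i,\ldots,d-\kappa$ (respectively $d-i+1,\ldots,d-\kappa$) lose every $S$-neighbor and become isolated, while the remaining $S$-vertices together with the surviving non-isolated $D$-vertices constitute a single connected component, anchored by the largest-index remaining $S$-vertex, namely $S^{(d-i)}$ in the first case and $S^{(d-i+1)}$ in the second. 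Applying the change of variable $m=d-k+1$ to the count of isolated $D$-vertices, together with the equality $l_k = b_{d-k+1}$, delivers the displayed formulas.

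Because $Y_i\ne Y''$, summing the two contributions yields
\[
\sum_{|Y|=i}(W(T-Y)-1) \;\ge\; (W(T-Y_i)-1)+(W(T-Y'')-1) \;=\; (b_{i+1}-1)+2\sum_{m=\kappa+1}^{i}(b_m-1).
\]
By Proposition~\ref{dominate}(a) we have $b_{\kappa+1}\ge 2$, so the inner sum is at least $1$; consequently the right-hand side is at least $b_{i+1}+1 > b_{i+1}$, which is the desired strict inequality.

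The main obstacle I expect is the explicit verification of the two component counts for $T-Y_i$ and $T-Y''$. This is not deep, but it requires careful bookkeeping: one has to track precisely which subwords contribute isolated $D$-vertices, and in particular observe that no $D$-vertex in subwords $d-i+1,\ldots,d-\kappa$ can survive after removal of $Y''$ (because the only remaining late $S$ is $S^{(d-i+1)}$, which lies at or before all relevant positions). Once those two identities are in hand, the inequality above is a one-line consequence.
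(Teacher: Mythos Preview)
Your proof is correct and takes a genuinely different route from the paper's. The paper never computes $W(T-Y)$ for any particular $Y$; instead it bounds $\sum_{|Y|=i}(W(T-Y)-1)\ge V_i$, the \emph{number} of vertex-cuts of size $i$, and then manufactures $\binom{n-\kappa-1}{\ell}$ distinct vertex-cuts of size $i=\kappa+\ell$ by adjoining to the unique minimum cut $\{x_1,\ldots,x_\kappa\}$ an arbitrary $\ell$-subset of the remaining $n-\kappa-1$ vertices avoiding one fixed $D$-vertex $a$ in subword $d-\kappa$. The conclusion then follows from the crude chain $b_{i+1}<n-\kappa-1\le\binom{n-\kappa-1}{\ell}$.

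You trade many contributions of size at least $1$ for two contributions whose exact values you determine. Your identities $W(T-Y_i)-1=\sum_{m=\kappa+1}^{i+1}(b_m-1)$ and $W(T-Y'')-1=\sum_{m=\kappa+1}^{i}(b_m-1)$ are correct: the description of which $D$-vertices become isolated matches the adjacency rule for threshold graphs, and the surviving vertices do form a single component since each is adjacent to the largest-index remaining $S$-vertex ($S^{(d-i)}$ and $S^{(d-i+1)}$ respectively), while the $S$-vertices themselves remain a clique. The final inequality then drops out immediately from $b_{\kappa+1}\ge 2$. Your argument is more structural and the closed form for $W(T-Y_i)-1$ is independently informative; the paper's argument, by contrast, gives a far stronger quantitative lower bound on the sum (exponential in $\ell$), though that extra strength is not used.
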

\begin{proof}
We observe that if $Y\subseteq V(T)$ is not a vertex-cut of $T$, then $W(T-Y)-1=0$. Hence the sum $\sum\limits_{\substack{|Y|=i}} (W(T-Y)-1)$ only consider vertex-cuts $Y$ of $T$.
 Let denote as $B_i$ the set of vertices in $T$ that correspond to the letters in the $(d-i+1)$--th subword  for every $i=1,\ldots,d$ (i.e., $|B_i|=b_i$). By Proposition \ref{dominate}(a), for every $i=1,\ldots,\kappa$ the set  $B_i$ has only one vertex, say $x_i$,  and by Observation \ref{obs}(c) the only  minimum vertex-cut in $T$  is the set $\{x_i\}_{i=1}^{\kappa}$.

   Let $V_{\kappa+\ell}$ be the number of all vertex-cuts of $T$ with cardinality $\kappa+\ell$, then clearly $V_{\kappa+\ell}\le \sum\limits_{\substack{|Y|=\kappa+\ell}} (W(T-Y)-1)$ since $W(T-Y)-1\ge 1$ for every vertex-cut $Y$ of $T$ with cardinality $\kappa+\ell$. Therefore, it is enough to show that $b_{\kappa+1+\ell}< V_{\kappa+\ell}$ for every $1\le\ell\le d-\kappa-1$.

   Suppose that $T$ has order $n$, let $a\in B_{\kappa+1}$ be such that its correspond letter is $D$ and consider the set $\bigcup\limits_{j=\kappa+1}^{d} B_j-\{a\}$ of cardinality $n-\kappa-1$. As $d\le n-1$ since $b_{\kappa+1}\ge 2$, we have that $\ell<n-\kappa-1$. Then, for each $L\subset\bigcup\limits_{j=\kappa+1}^{d} B_j-\{a\}$ with cardinality $|L|=\ell$ we have that $Y=\{x_j\}_{j=1}^{\kappa}\cup\{L\}$ is a vertex-cut of $T$ with cardinality $\kappa+\ell$ since vertex $a$ is isolate in $T-Y$. Therefore, there are at least ${n-\kappa-1\choose \ell}$ vertex-cuts of $T$ with cardinality $\kappa+\ell$ concluding that ${n-\kappa-1\choose \ell}\le V_{\kappa+\ell}$. On the other hand, we notice that $b_{\kappa+1+\ell}<n-\kappa-1$ for every   $1\le \ell\le d-\kappa-1$, otherwise we would have that $b_j=1$ for every $j=1,\ldots,\kappa+1$, a contradiction since $b_{\kappa+1}\ge2$. Thus, as $n-\kappa+1\le {n-\kappa-1\choose \ell}$ for every $1\le \ell< n-\kappa-1$,  we conclude that $b_{\kappa+1+\ell}<V_{\kappa+\ell}$.
\end{proof}

\section{Algebraic tools}

\subsection{Betti numbers and connectivity}
 The following result has already been pointed out by several authors \cite[Theorem 6]{Goodarzi}
 \cite[Corollary 1.2]{KatzmanBettiGraph}. We include a proof in the contexts we need for the sake of completeness.
\begin{proposition}\label{beta}
Let $G$ be a graph of order $n$ and let $\Delta=\Delta(G)$.
Then,
$$
 \beta_{i,i+1}(R/I_\Delta)=\sum_{|Y|=n-i-1} (W(G-Y)-1),
$$  for every $1\le i\le n$, where $Y\subseteq V(G)$.
  As a consequence,
$G$ is $k$--connected if and only if
$\beta_{i,i+1}(R/I_\Delta) =0$ for all $i\geq n-k.$ In particular,
$$
\kappa(G) =\max\{ k\; | \; \beta_{i,i+1}(R/I_\Delta) =0\hbox{ for all }i\geq n-k\}.
$$
\end{proposition}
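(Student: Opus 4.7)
The plan is to deduce the formula directly from Hochster's formula as stated in the background. First, fix $i$ with $1 \le i \le n$ and let $\alpha \in \NN^n$ be a squarefree multidegree with $|\alpha| = i+1$; put $U = \supp(\alpha) \subseteq V(G)$, so $|U| = i+1$. Hochster's formula (with the indexing of the background, applied to $\beta_{i,\alpha}$) yields
$$\beta_{i,\alpha}(R/I_\Delta) \;=\; \dim_K \tilde{H}^{|U|-i-1}(\Delta_{|_U}; K) \;=\; \dim_K \tilde{H}^0(\Delta_{|_U}; K).$$
Summing over all squarefree $\alpha$ with $|\alpha|=i+1$, equivalently over subsets $U \subseteq V(G)$ with $|U|=i+1$, recovers the $\ZZ$-graded Betti number $\beta_{i,i+1}(R/I_\Delta)$.

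Next I would use the standard identification that $\dim_K \tilde{H}^0(\Delta_{|_U}; K)$ equals one less than the number of connected components of $\Delta_{|_U}$. Since $\Delta = \Delta(G)$ is the clique (flag) complex of $G$, the $1$-skeleton of $\Delta_{|_U}$ is exactly the induced subgraph $G[U]$, and connectivity of a simplicial complex is determined by its $1$-skeleton; hence $\Delta_{|_U}$ and $G[U]$ have the same number of connected components. Setting $Y = V(G) \setminus U$, so that $|Y| = n-i-1$ and $G[U] = G - Y$, we obtain
$$\beta_{i,i+1}(R/I_\Delta) \;=\; \sum_{|U|=i+1} \bigl(W(G - (V(G)\setminus U)) - 1\bigr) \;=\; \sum_{|Y| = n-i-1} \bigl(W(G-Y) - 1\bigr),$$
which is the claimed identity.

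For the consequence, recall that $G$ is $k$-connected precisely when $W(G-Y) = 1$ for every $Y \subseteq V(G)$ with $|Y| \le k-1$. Under the substitution $|Y| = n-i-1$, this condition becomes $i \ge n-k$. Because each summand $W(G-Y) - 1$ is a nonnegative integer, the sum vanishes if and only if every summand vanishes, so $G$ is $k$-connected if and only if $\beta_{i,i+1}(R/I_\Delta) = 0$ for all $i \ge n-k$; taking the maximum such $k$ gives the stated description of $\kappa(G)$. The whole argument is essentially a bookkeeping exercise once Hochster's formula is available; the only delicate point is the identification of $\tilde{H}^0$ with $W(\cdot)-1$ via the flag-complex property, which ensures that no information about components is lost in passing from the graph $G[U]$ to the subcomplex $\Delta_{|_U}$.
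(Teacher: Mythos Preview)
Your proof is correct and follows essentially the same route as the paper: apply Hochster's formula in degree $i+1$, identify $\dim_K\widetilde{H}^0(\Delta_{|_U};K)$ with $W(G[U])-1$, and reindex via $Y=V(G)\setminus U$. One minor remark: the flag-complex property is not actually needed for the identification $W(\Delta_{|_U})=W(G[U])$, since the connected components of any simplicial complex agree with those of its $1$-skeleton, and the $1$-skeleton of $\Delta_{|_U}$ is $G[U]$ regardless of whether $\Delta$ is flag.
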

\begin{proof}
From Hochster's Formula \cite[Theorem 5.1]{HochsterFormula}, we have that
\begin{align*}
\beta_{i,i+1}(R/I_\Delta)=\beta_{i-1,i+1}(I_\Delta)&=\sum_{|Y|=i+1}
\dim_K \widetilde{H}^0 (\Delta_{|_Y};K)\\
&=\sum_{|Y|=i+1}  (W(G_{|_Y})-1)\\
&=\sum_{|Y|=n-i-1} (W(G- Y)-1)\\
&=\sum_{|Y|=n-i-1}  (W(G- Y)-1).
\end{align*}
The rest follows from observing that $\beta_{i,i+1}(Y/I_\Delta)=0$ if and only if there is no vertex-cut $Y$ such that $|Y|=n-i-1.$
\end{proof}

The previous result implies the following result, which is already known  \cite{KatzmanBettiGraph,Goodarzi}.

\begin{corollary}[{\cite[Theorem 6]{Goodarzi}}]\label{CorDepth}
Let $G$ be a graph and set $\Delta=\Delta(G).$
Then, $$\Depth(R/I_\Delta)\leq \kappa(G)+1.$$
Furthermore, the equality is reached if $G$ is chordal.
\end{corollary}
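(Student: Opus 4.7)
The plan is to translate the depth statement into a projective-dimension statement via the Auslander-Buchsbaum formula $\Depth(R/I_\Delta)=n-\pd(R/I_\Delta)$ recorded in the background subsection, and then read off the needed (in)equality from the characterization of $\kappa(G)$ given by Proposition \ref{beta}.

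For the inequality, I would first note that $\Depth(R/I_\Delta)\le\kappa(G)+1$ is equivalent to $\pd(R/I_\Delta)\ge n-\kappa(G)-1$. Setting $k=\kappa(G)+1$ in the identity
$$\kappa(G)=\max\{k\mid \beta_{i,i+1}(R/I_\Delta)=0 \text{ for all } i\ge n-k\}$$
of Proposition \ref{beta}, maximality forces the existence of some $i_0\ge n-\kappa(G)-1$ with $\beta_{i_0,i_0+1}(R/I_\Delta)\ne 0$. Hence $\pd(R/I_\Delta)\ge i_0\ge n-\kappa(G)-1$, giving the bound.

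For equality in the chordal case, I would invoke the $2$-linear resolution of $I_{\Delta(G)}$ (noted in the introduction as the consequence of Fr\"oberg's theorem valid for chordal $G$): this forces $\beta_{i,j}(R/I_\Delta)=0$ whenever $j\ne i+1$ and $i\ge 1$. Consequently $\pd(R/I_\Delta)$ equals the largest index $i$ with $\beta_{i,i+1}(R/I_\Delta)\ne 0$. By Proposition \ref{beta} this largest index is achieved by taking $Y$ to be a minimum vertex-cut, that is $|Y|=\kappa(G)$, yielding $\pd(R/I_\Delta)=n-\kappa(G)-1$. Auslander-Buchsbaum then delivers the equality $\Depth(R/I_\Delta)=\kappa(G)+1$.

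All ingredients are already in hand: the inequality is a direct corollary of Proposition \ref{beta} combined with Auslander-Buchsbaum, and the chordal equality requires only the extra input of Fr\"oberg's theorem. The main (and essentially only) delicate point will be confirming that $2$-linearity genuinely confines every nonzero Betti number of $R/I_\Delta$ (with $i\ge 1$) to the linear strand, so that the largest $i$ appearing in Proposition \ref{beta} coincides with the projective dimension; this is immediate from the very definition of a $2$-linear resolution, so I do not expect any real obstacle.
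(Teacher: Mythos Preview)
Your proposal is correct and follows exactly the route the paper intends: the corollary is stated immediately after Proposition~\ref{beta} with the remark that it is implied by that proposition, and your argument via Auslander--Buchsbaum together with Fr\"oberg's theorem (for the chordal equality) supplies precisely the missing details. The only minor sharpening you could add is that the index $i_0$ you produce must in fact equal $n-\kappa(G)-1$, since $\beta_{i,i+1}=0$ for all $i\ge n-\kappa(G)$; but your weaker statement already suffices for the inequality.
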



\subsection{Shifting theory}\label{SubsectionShift}
In order to continue our study of $b$--vectors,  we need to recall a few terms and properties of shifting theory for simplicial complex.

\begin{definition}
A simplicial complex $\Delta$ on $[n]$ is shifted if, for $\sigma\in\Delta, i\in \sigma,$ and
$j\in [n]$ with
$j>i,$ one has $(\sigma\setminus \{ i\})\bigcup\{ j\}\in\Delta$.
\end{definition}

\begin{definition}
A shifting operation on $[n]$ is a map which associates each simplicial
complex $\Delta$ on $[n]$ with a simplicial complex $\Shift(\Delta)$ on $[n]$ and which satisfies the following conditions:
\begin{itemize}
\item[(a)]  $\Shift(\Delta)$ is shifted;
\item[(b)] $\Shift(\Delta)=\Delta$ if $\Delta$ is shifted;
\item[(c)] $f(\Delta) = f(\Shift(\Delta))$;
\item[(d)] $\Shift(\widetilde{\Delta})\subseteq\Shift(\Delta)$ if $\widetilde{\Delta}\subseteq\Delta$ and $\widetilde{\Delta}^{(0)}=\Delta^{(0)}$.
\end{itemize}
\end{definition}

\begin{definition}
We say that a shifting operation $\Shift(-)$ is compatible with Alexander Duality if
$\Shift(\Delta^\vee)=\Shift(\Delta)^\vee$
\end{definition}

There are several well-known shifting operations. Among these operations, the exterior shifting $\Delta^e$ and the symmetric shifting $\Delta^s$ stand up.
We point out that $\Delta^s$ is only defined in characteristic zero.
 We omit the definitions as its not needed for our purposes. However, we refer the interested reader in the book by Herzog and Hibi \cite[Chapter 11]{BookMonomial}.

 We recall a few properties for the exterior and symmetric shifting.

\begin{theorem}[{\cite[Theorem 11.4.1]{BookMonomial}}]\label{ThmShiftProp}
Let $\Delta$ be any simplicial complex.
The following statements holds:
\begin{enumerate}
\item $\Depth(R/I_\Delta)=\Depth(R/I_{\Delta^e})$.
\item $(\Delta^e)^\vee=(\Delta^\vee)^e$
\item $\Reg(R/I_\Delta)=\Reg(R/I_{\Delta^e})$.
\end{enumerate}
If the ground field has characteristic zero, then the same statements hold for the symmetric shifting.
\end{theorem}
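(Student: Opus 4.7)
The strategy is to exploit the fact that both shifting operations come from generic initial ideals ($\mathrm{gin}$) and then to pull back preservation properties of $\mathrm{gin}$ through the appropriate Stanley-Reisner correspondences. The exterior shifting $\Delta^e$ is obtained by applying a generic linear change of coordinates to the exterior face ideal of $\Delta$ in the exterior algebra $\bigwedge V$, taking its initial ideal with respect to the reverse lexicographic order, and reinterpreting the resulting squarefree monomial ideal as a simplicial complex. The symmetric shifting $\Delta^s$ uses instead the polynomial ring: one takes $\mathrm{gin}(I_\Delta)$ in the reverse lexicographic order and then depolarizes the resulting Borel-fixed ideal (which requires characteristic zero) to obtain a squarefree monomial ideal.

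For items (1) and (3), I would invoke the preservation of projective dimension and regularity under $\mathrm{gin}$ in the reverse lexicographic order, together with the fact that polarization preserves graded Betti numbers. In the symmetric setting this is essentially Bayer--Stillman; in the exterior setting it is due to Aramova, Herzog, and Hibi. The Auslander--Buchsbaum formula then converts the equality of projective dimensions into an equality of depths. This gives $\Depth(R/I_\Delta) = \Depth(R/I_{\Delta^e})$ and $\Reg(R/I_\Delta) = \Reg(R/I_{\Delta^e})$, and repeating the argument with the exterior $\mathrm{gin}$ replaced by the symmetric $\mathrm{gin}$ handles $\Delta^s$ in characteristic zero.

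For item (2), the plan is to check directly that Alexander duality commutes with passing to a reverse lexicographic $\mathrm{gin}$. On the combinatorial side, $F \in \Delta^\vee$ iff $[n] \setminus F \notin \Delta$, so Alexander duality is encoded by the involution $F \mapsto [n] \setminus F$ on subsets of $[n]$. On the algebraic side this corresponds to a well-behaved involution on squarefree monomial ideals that commutes with a generic upper triangular change of coordinates, because both the duality and the reverse lexicographic order are defined using the same canonical ordering on $[n]$. Tracking the generic linear change of variables through both functorial constructions yields $(\Delta^\vee)^e = (\Delta^e)^\vee$, and analogously for $\Delta^s$.

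The step I expect to demand the most care is (2): while (1) and (3) reduce to standard preservation properties of $\mathrm{gin}$ together with Auslander--Buchsbaum, the compatibility of shifting with Alexander duality requires careful bookkeeping of two distinct functorial constructions on squarefree monomial ideals. The characteristic zero hypothesis for the symmetric shifting enters in (1) and (3) through the requirement that the reverse lexicographic $\mathrm{gin}$ in the polynomial ring be Borel-fixed, which is what makes the extremal Betti number comparison work in that setting.
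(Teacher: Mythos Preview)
The paper does not prove this theorem at all; it is simply quoted from Herzog and Hibi's book \cite[Theorem 11.4.1]{BookMonomial} as a known background result and used as a black box. There is therefore no proof in the paper to compare your proposal against.

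For what it is worth, your outline is broadly in line with how the result is established in the cited reference and the surrounding literature: parts (1) and (3) do indeed reduce to the Bayer--Stillman theorem on $\mathrm{gin}_{\mathrm{revlex}}$ in the symmetric case and to the analogous results of Aramova--Herzog--Hibi in the exterior algebra, combined with Auslander--Buchsbaum. Your assessment that (2) is the delicate part is correct; however, your justification there (that Alexander duality ``commutes with a generic upper triangular change of coordinates'') is hand-wavy and would not pass as a proof without further work. The actual argument tracks how the exterior face ideal of $\Delta^\vee$ relates to that of $\Delta$ under the natural duality in the exterior algebra and then verifies compatibility with taking generic initial ideals; this requires more than the sentence you have written. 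But again, since the paper itself offers no argument, this is a comment on your sketch rather than a comparison.
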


We now mention a conjecture that relates the Betti numbers of algebraic and symmetric shifting.

\begin{conjecture}[{\cite[Conjecture 2.3]{AHH}}]\label{Conj}
Let $\Delta$ be a simplicial complex and suppose that $\Char(K)=0.$
Then,
$$\beta_{i,j}(I_{\Delta^s})\leq \beta_{i,j }(I_{\Delta^e}).$$
\end{conjecture}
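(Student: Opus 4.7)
The plan is to attack the conjecture first in the restricted setting the paper actually needs (ideals with linear resolution and skeleta of chordal clique complexes), since in that setting the numerical invariants preserved by both shiftings — namely the $f$-vector, the depth, and the regularity (Theorem \ref{ThmShiftProp}) — rigidify the graded Betti table enough to force equality, which is much stronger than the conjectured inequality.

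\textbf{Step 1.} Suppose first that $I_\Delta$ has a $t$-linear resolution. I would argue that both $I_{\Delta^e}$ and $I_{\Delta^s}$ again have $t$-linear resolutions. Part (3) of Theorem \ref{ThmShiftProp} gives $\Reg(R/I_{\Delta})=\Reg(R/I_{\Delta^e})=\Reg(R/I_{\Delta^s})$, so it remains to check that the Stanley--Reisner ideals of the shifts are generated in a single degree $t$; but generating degrees of $I_\Delta$ are controlled by the minimal non-faces of $\Delta$, and since the shiftings preserve the $f$-vector, the minimal generators of $I_{\Delta^e}$ and $I_{\Delta^s}$ all sit in degree $t$. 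Because the regularity coincides with the maximal degree of a minimal generator for an ideal with linear resolution, the resolution must be $t$-linear in both cases.

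\textbf{Step 2.} Under a $t$-linear resolution, $\beta_{i,j}=0$ unless $j=i+t$, and I would express $\beta_{i,i+t}$ by Hochster's formula. For shifted simplicial complexes the restriction to a subset $Y$ of the vertex set has a purely combinatorial description determined only by the $f$-vector (one can work it out using that the facets of $\Delta^e$ and $\Delta^s$ are completely prescribed by the face numbers in a shifted configuration). Thus $\beta_{i,i+t}(I_{\Delta^e})$ and $\beta_{i,i+t}(I_{\Delta^s})$ are both given by the same combinatorial formula in $f(\Delta)=f(\Delta^e)=f(\Delta^s)$, yielding equality and, a fortiori, the conjectured inequality.

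\textbf{Step 3.} To handle the $t$-skeleta $\Delta(G)^{(t)}$ of a chordal graph's clique complex, I would leverage that the $t$-skeleton of a complex whose ideal has linear resolution again has a (pure, linear) Stanley--Reisner ideal, and apply Step 2. Chordality enters via the $2$-linear resolution of $I_{\Delta(G)}$, together with the combinatorial shifting for chordal graphs alluded to in the introduction, which should allow one to identify $(\Delta(G)^{(t)})^e$ and $(\Delta(G)^{(t)})^s$ explicitly.

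\textbf{Main obstacle.} For the full conjecture (without a linearity or skeleton hypothesis), the rigidity that makes Steps 1--2 work breaks down: when $\Delta$ has a non-linear resolution, the $f$-vector and regularity no longer determine the graded Betti table, and one must control how Betti numbers behave under two genuinely different degenerations — one into the exterior algebra and one into the symmetric algebra. The natural approach would be to find a flat family over $\mathbb{A}^1$ whose generic fiber is (polarization of) $I_{\Delta^e}$ and whose special fiber is $I_{\Delta^s}$, and appeal to upper semicontinuity of Betti numbers. Constructing such a family, or otherwise producing a direct comparison between generic initial ideals in the two algebras, is the part I would expect to be hard, and is presumably why only the linear-resolution case and the chordal skeletal case are settled here.
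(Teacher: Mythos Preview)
The statement is recorded as a conjecture from \cite{AHH}; the paper does not prove it in general, and you correctly restrict attention to the two special cases the paper does settle (Proposition~\ref{PropPreservationBetta} for linear resolutions and Proposition~\ref{Prop e=s} for skeleta of chordal clique complexes).

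For the linear-resolution case your Steps~1--2 arrive at the right conclusion, but the justification in Step~2 is shaky. The claim that ``the facets of $\Delta^e$ and $\Delta^s$ are completely prescribed by the face numbers in a shifted configuration'' is false in general: distinct shifted complexes can share an $f$-vector. The paper bypasses this entirely by invoking Theorem~\ref{ThmBettiH}, which says that for \emph{any} complex whose ideal has a $t$-linear resolution the total Betti numbers are a fixed expression in the $h$-vector. Since both shiftings preserve the $f$-vector (hence the $h$-vector) and, by your Step~1, both shifted ideals remain $t$-linear, one gets $\beta_{i,j}(I_{\Delta^s})=\beta_{i,j}(I_\Delta)=\beta_{i,j}(I_{\Delta^e})$ immediately, with no Hochster computation and no structural claim about shifted complexes.

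Your Step~3 contains a genuine error. The $t$-skeleton of a complex with $2$-linear Stanley--Reisner ideal does \emph{not} in general have a linear resolution: for $t<\dim\Delta(G)$ the minimal non-faces of $\Delta(G)^{(t)}$ are the non-edges of $G$ (size~$2$) together with the $(t{+}1)$-faces of $\Delta(G)$ (size~$t{+}2$), so $I_{\Delta(G)^{(t)}}$ is generated in two different degrees and Step~2 cannot be applied. The paper's route is quite different. It first shows $\Delta(G)^e=\Delta(G)^s$ \emph{as complexes} (Theorem~\ref{TeoSameShift}): both shifts have $2$-linear ideals, hence are clique complexes; being shifted, they are clique complexes of threshold graphs by Proposition~\ref{TeoCarly}; and a threshold graph is uniquely determined by its $b$-vector, hence by its $f$-vector. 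Equality of the complexes then passes to skeleta via the identity $\Shift(\Delta^{(i)})=(\Shift(\Delta))^{(i)}$, and equality of Betti numbers is immediate.
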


In order to study the previous conjecture and the  $b$-vector of a chordal graph, we need to recall the definition and properties of the $h$-vector of a simplicial complex.

\begin{definition}
Let $\Delta$ be a simplicial complex of dimension $d-1$ and
let $f(\Delta)=(f_{-1},\ldots,f_{d-1})$ be the $f$-vector of $\Delta$.
The $h$-vector of $\Delta,$ $h(\Delta)=h_0,\ldots,h_d)$, is defined by
$$
\sum^d_{i=0}h_it^i=\sum^d_{i=0} f_{i-1} t^i (1-t)^{d-i}.
$$
\end{definition}

\begin{remark}\label{h-vector f-vector}
We have that
$$
h_j=\sum^j_{i=0}(-1)^{j-i}\binom{d-i}{j-i} f_{i-1} \quad \& \quad f_{j-1}=\sum^j_{i=0} \binom{d-i}{j-i} h_i.
$$
\end{remark}

We now recall a result  that would imply that the $f$-vector of certain simplicial complexes determine its Betti numbers.

\begin{theorem}[{\cite[Corollary 3.4]{H10}}]\label{ThmBettiH}
If $I_\Delta$ has a $t$-linear free resolution,
then
$$\beta_i(R/I_\Delta)=\sum^{t+i}_{\ell=0} (-1)^{\ell+i+1} h_{t+i-\ell} \binom{n-d}{\ell}$$
\end{theorem}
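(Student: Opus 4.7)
The plan is to derive the stated formula by equating two distinct expressions for the Hilbert series $H(R/I_\Delta, s)$ of the Stanley--Reisner ring and extracting a single coefficient. On the one hand, classical Stanley--Reisner theory gives
$$H(R/I_\Delta, s) \;=\; \frac{\sum_{k=0}^{d} h_k s^k}{(1-s)^d},$$
which follows from writing the Hilbert series in terms of the $f$-vector of $\Delta$ and applying the definition of the $h$-vector recorded in the excerpt. On the other hand, additivity of Hilbert series along a graded minimal free resolution of $R/I_\Delta$ yields
$$H(R/I_\Delta, s) \;=\; \frac{\sum_{i \geq 0} (-1)^i \sum_{j} \beta_{i,j}(R/I_\Delta)\, s^j}{(1-s)^n}.$$

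The key simplification is the $t$-linearity hypothesis: by definition $\beta_{i,i+j}(I_\Delta)=0$ whenever $j\neq t$, and via the shift $\beta_{i,j}(R/I_\Delta)=\beta_{i-1,j}(I_\Delta)$ for $i\geq 1$ this concentrates the bigraded Betti numbers of $R/I_\Delta$ on a single diagonal. Consequently only $\beta_{0,0}(R/I_\Delta)=1$ and $\beta_{i,\,i+t-1}(R/I_\Delta)=\beta_i(R/I_\Delta)$ for $i\geq 1$ can be nonzero, so the numerator in the second Hilbert series collapses to
$$1+\sum_{i\geq 1}(-1)^i\beta_i(R/I_\Delta)\,s^{i+t-1}.$$

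Combining the two expressions produces the polynomial identity
$$(1-s)^{n-d}\sum_{k=0}^d h_k s^k \;=\; 1\,+\sum_{i\geq 1}(-1)^i\beta_i(R/I_\Delta)\,s^{i+t-1}.$$
Expanding $(1-s)^{n-d}=\sum_{\ell}(-1)^\ell\binom{n-d}{\ell}s^\ell$ and reading off the coefficient of $s^{i+t-1}$ on both sides isolates $\beta_i(R/I_\Delta)$ as the alternating sum $\sum_{\ell}(-1)^{\ell+i+1}\binom{n-d}{\ell}h_{t+i-\ell}$, where terms outside the stated range $0\le\ell\le t+i$ vanish automatically because $h_k=0$ for $k<0$ or $k>d$ and $\binom{n-d}{\ell}=0$ for $\ell>n-d$.

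The main obstacle is purely bookkeeping: one must track the off-by-one shift between $\beta_{i,j}(I_\Delta)$ and $\beta_{i,j}(R/I_\Delta)$ carefully, and verify that the sign and indexing conventions match those in the statement. I would sanity-check against a small example such as $I_\Delta=(x_1x_2,x_1x_3,x_2x_3)\subset K[x_1,x_2,x_3]$, which has a $2$-linear resolution with known Betti numbers $1,3,2$, before writing out the general coefficient extraction.
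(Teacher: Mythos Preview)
The paper does not supply its own proof of this statement: Theorem~\ref{ThmBettiH} is quoted verbatim from \cite[Corollary~3.4]{H10} and used as a black box. Your Hilbert-series argument---equating the Stanley--Reisner expression $\sum_k h_k s^k/(1-s)^d$ with the resolution expression $\sum_{i,j}(-1)^i\beta_{i,j}s^j/(1-s)^n$, collapsing the latter via $t$-linearity, and extracting the coefficient of $s^{i+t-1}$---is exactly the standard derivation and is correct.

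One remark, since you flagged the bookkeeping yourself: if you actually carry out your proposed sanity check on $I_\Delta=(x_1x_2,x_1x_3,x_2x_3)$ (where $n=3$, $d=1$, $t=2$, $h_0=1$, $h_1=2$, and $\beta_1(R/I_\Delta)=3$, $\beta_2(R/I_\Delta)=2$), you will find that the formula as printed in the paper yields $2$ for $i=1$ rather than $3$. Your coefficient extraction gives
\[
\beta_i(R/I_\Delta)=\sum_{\ell}(-1)^{\ell+i}\binom{n-d}{\ell}h_{\,t+i-1-\ell},
\]
whereas the displayed formula in the statement has $h_{t+i-\ell}$ and sign $(-1)^{\ell+i+1}$; that version is in fact the formula for $\beta_i(I_\Delta)=\beta_{i+1}(R/I_\Delta)$. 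This is an indexing slip in the transcription from \cite{H10}, not an error in your argument. It does not affect the paper's applications (Propositions~\ref{PropPreservationBetta} and~\ref{beta y b}), which only require that the total Betti numbers are determined by the $h$-vector when the resolution is linear.
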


We  are able to answer Conjecture \ref{Conj} for ideals with a linear free resolution.

\begin{proposition}\label{PropPreservationBetta}
Let $\Delta$ be a simplical complex such that $I_\Delta$ has a $t$-linear free resolution. Then,
$\beta_{i,j}(R/I_\Delta)=\beta_{i,j}(R/I_{\Delta^e})$.
Furthermore, If $\Char(K)=0,$ then
$$
\beta_{i,j}(R/I_{\Delta^s})=\beta_{i,j}(R/I_{\Delta})=\beta_{i,j}(R/I_{\Delta^e}).
$$
\end{proposition}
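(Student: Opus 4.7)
The plan is to reduce everything to Theorem \ref{ThmBettiH}: once I know that both $I_{\Delta^e}$ and $I_{\Delta^s}$ have $t$--linear resolutions, that formula expresses the total Betti numbers purely in terms of the $h$--vector, which is preserved by any shifting operation. So the crux of the proof is upgrading ``same regularity'' to ``same linear strand'' under shifting.

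First I would observe that the hypothesis forces $I_\Delta$ to be generated in degree $t$, so every minimal non-face of $\Delta$ has cardinality $t$ and $\Delta$ contains the full $(t-2)$--skeleton on its vertex set; equivalently $f_{i-1}(\Delta) = \binom{n}{i}$ for every $i \leq t-1$. Property (c) in the definition of a shifting operation then forces the same identities for $\Delta^e$, so the minimum non-face of $\Delta^e$ has cardinality at least $t$ and $I_{\Delta^e}$ is generated in degrees $\geq t$. On the other hand, Theorem \ref{ThmShiftProp}(3) gives $\Reg(R/I_{\Delta^e}) = \Reg(R/I_\Delta) = t-1$, and the only way to reconcile minimum generator degree $\geq t$ with this regularity is that every minimal generator of $I_{\Delta^e}$ sits in degree exactly $t$ and that $\beta_{i,i+j}(I_{\Delta^e}) = 0$ for $j \neq t$ (the vanishing in strands $j < t$ being automatic from the minimality of the resolution). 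Hence $I_{\Delta^e}$ has a $t$--linear resolution.

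Next I would apply Theorem \ref{ThmBettiH} to both $\Delta$ and $\Delta^e$: since $f(\Delta) = f(\Delta^e)$ and the dimension $d$ and vertex count $n$ agree, Remark \ref{h-vector f-vector} gives $h(\Delta) = h(\Delta^e)$, and plugging into the formula yields $\beta_i(R/I_\Delta) = \beta_i(R/I_{\Delta^e})$ for every $i$. Because both ideals have $t$--linear resolutions, their graded Betti numbers are concentrated on the single strand $j = i + t - 1$, so this identity of total Betti numbers upgrades to $\beta_{i,j}(R/I_\Delta) = \beta_{i,j}(R/I_{\Delta^e})$ for all $i,j$. In characteristic zero the identical argument applies to $\Delta^s$, since Theorem \ref{ThmShiftProp} transfers verbatim and $f(\Delta) = f(\Delta^s)$ is part of the definition of a shifting operation. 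The main obstacle is the first step: knowing only that regularity is preserved is insufficient by itself to conclude linearity of the resolution, and the preservation of the $f$--vector is the essential extra ingredient that pins down the minimum generator degree and closes the gap.
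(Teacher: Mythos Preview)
Your argument is correct and follows essentially the same route as the paper: show that $I_{\Delta^e}$ (and $I_{\Delta^s}$) inherit a $t$--linear resolution, then use Theorem~\ref{ThmBettiH} together with the preservation of the $f$-- (hence $h$--) vector to match total Betti numbers, and finally observe that linearity collapses total Betti numbers to graded ones. The only difference is that the paper simply asserts that $I_{\Delta^e}$ and $I_{\Delta^s}$ have $t$--linear resolutions ``by Theorem~\ref{ThmShiftProp}'', whereas you supply the missing justification explicitly by combining the $f$--vector preservation (to force generation in degree $\geq t$) with the regularity preservation (to cap the strand at $t$); your version is the more careful of the two.
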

\begin{proof}
Since $I_\Delta$ has a free $t$-linear reolution,
$I_{\Delta^s}$ and $I_{\Delta^e}$ also have a free resolution by Theorem \ref{ThmShiftProp}.
We conclude that
$$
\beta_{i,j}(R/I_{\Delta^s})=\beta_{i,j}(R/I_{\Delta})=\beta_{i,j}(R/I_{\Delta^e})=0.
$$
for $j\neq i+t-1$.
In addition,
$\beta_{i}(R/I_{\Delta^s})=\beta_{i,j}(R/I_{\Delta^s})$,
$\beta_{i,j}(R/I_{\Delta})=\beta_{i,j}(R/I_{\Delta})$, and
$\beta_{i,j}(R/I_{\Delta^e})=\beta_{i,j}(R/I_{\Delta^e}).$
Since shifting preserves $f$-vector, it also preserves $h$-vectors. Then, exterior and symmetric shifting preserve total Betti numbers in this case by Theorem \ref{ThmBettiH}.
Hence,
$$
\beta_{i,j}(R/I_{\Delta^s})=\beta_{i,j}(R/I_{\Delta})=\beta_{i,j}(R/I_{\Delta^e}).
$$
for $j=i+t-1.$
\end{proof}

As a consequence of Theorem \ref{ThmBettiH} and Proposition \ref{beta}, we prove that shifting preserves  numbers related to connectivity for chordal graphs.

\begin{corollary}\label{CorConnectivity}
 Let  $G$ be a chordal graph with vertex connectivity $\kappa$ and set $\Delta=\Delta(G).$
Then,
$$
\sum_{|Y|=\kappa} (W(\Delta- Y)-1)=
\sum_{|Y|=\kappa} (W(\Delta^e- Y)-1),
$$ where $Y\subseteq V(G)$.
Furthermore, if $\Char(K)=0$ ,
then
$$
\sum_{|Y|=\kappa} (W(\Delta- Y)-1)=
\sum_{|Y|=\kappa} (W(\Delta^s- Y)-1).
$$
\end{corollary}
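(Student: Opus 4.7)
The plan is to rewrite each side of the desired identity as a graded Betti number and then apply the preservation result of Proposition~\ref{PropPreservationBetta}. The key input for applicability is that, since $G$ is chordal, Fröberg's theorem says $I_{\Delta(G)}$ has a $2$--linear resolution; this puts us exactly in the setting where Proposition~\ref{PropPreservationBetta} guarantees
\[
\beta_{i,j}(R/I_\Delta)=\beta_{i,j}(R/I_{\Delta^e})
\]
for all $i,j$, with the analogous equality for $\Delta^s$ when $\Char(K)=0$.

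The next step is to observe that the computation inside the proof of Proposition~\ref{beta} is really just Hochster's formula, and hence goes through verbatim for any simplicial complex $\Delta'$ on $[n]$ (not only for clique complexes of graphs). Explicitly,
\[
\beta_{i,i+1}(R/I_{\Delta'})=\sum_{|W|=i+1}\dim_K\widetilde{H}^0(\Delta'_{|_W};K)=\sum_{|Y|=n-i-1}\bigl(W(\Delta'-Y)-1\bigr),
\]
where the last equality follows from the reindexing $Y=[n]\setminus W$ together with the standard identification of $\dim_K\widetilde{H}^0$ with the number of connected components minus one. I would apply this to $\Delta$ and to $\Delta^e$ (resp.\ $\Delta^s$) with the choice $i=n-\kappa-1$, so that $n-i-1=\kappa$.

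Combining the two steps, both sums $\sum_{|Y|=\kappa}(W(\Delta-Y)-1)$ and $\sum_{|Y|=\kappa}(W(\Delta^e-Y)-1)$ equal the same graded Betti number $\beta_{n-\kappa-1,\,n-\kappa}(R/I_\Delta)=\beta_{n-\kappa-1,\,n-\kappa}(R/I_{\Delta^e})$, and analogously for the symmetric shifting in characteristic zero.

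The one point that needs care, and which I expect to be the main conceptual obstacle, is that the generalized form of Proposition~\ref{beta} must be applied to $\Delta^e$ (and $\Delta^s$), which in general is not the clique complex of a chordal graph---indeed, chordality of $G$ is used only to guarantee the $2$--linear resolution on the original side. This is not an actual obstruction, because the Hochster-formula derivation of the Betti formula in terms of $W(\Delta'-Y)$ uses nothing beyond the fact that $\widetilde{H}^0$ counts components; but it is worth stating explicitly that the formula is being used for an arbitrary simplicial complex, so that the argument does not tacitly rely on a flagness property that shifting might destroy.
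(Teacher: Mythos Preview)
Your proof is correct and follows exactly the same route as the paper's, which simply states that the corollary ``follows immediately from Propositions~\ref{beta} and~\ref{PropPreservationBetta}.'' Your explicit observation that the Hochster-formula computation behind Proposition~\ref{beta} applies to an arbitrary simplicial complex (so that it may be invoked for $\Delta^e$ and $\Delta^s$, not just clique complexes) is precisely the point the paper leaves implicit, and your identification of the relevant index $i=n-\kappa-1$ is correct.
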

\begin{proof}
This follows immediately from Propositions \ref{beta} and \ref{PropPreservationBetta}.
\end{proof}

We now recall an structural result that characterizes clique complexes of threshold graphs.

\begin{proposition}[{\cite[Theorem 2]{Carly}}]\label{TeoCarly}
The graph $G$ is threshold if and only if  $\Delta(G)$ is shifted.
\end{proposition}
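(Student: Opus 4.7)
My plan is to prove the two directions using a common primitive: if $\Delta(G)$ is shifted in some ordering and $\{v_a, v_b\}$ is an edge of $G$ with $a<b$, then by shifting inside the face $\{v_a,v_b\}$ one immediately deduces that $v_b$ is adjacent to $v_c$ for every $c>a$ with $c\neq b$, and $v_a$ is adjacent to $v_c$ for every $c>b$. I will call this the \emph{edge-growth observation}.

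For the forward direction ($G$ threshold $\Rightarrow$ $\Delta(G)$ shifted), I induct on $n=|V(G)|$, with $n=1$ trivial. In the inductive step, let $v$ be the vertex corresponding to the last letter $w_n$ of the construction word. If $w_n=S$, then $v$ is universal; assign $v$ the label $n$ and keep the labels of $G-v$ from the induction hypothesis. A shift in $\Delta(G)$ is either entirely within $[n-1]$ (handled by the hypothesis, possibly after detaching and reattaching $v$ when $v\in\sigma$), or of the form $i\mapsto n$ with $\sigma\subseteq[n-1]$, in which case $(\sigma\setminus\{i\})\cup\{v\}$ is a clique because $v$ is universal. If $w_n=D$, then $v$ is isolated; assign $v$ the label $1$ and shift the labels of $G-v$ up by one. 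Since $v$ appears in no face of size $\geq 2$, every shift is either a trivial singleton-to-singleton shift from $\{v\}=\{1\}$ or a shift entirely within the relabeled $\Delta(G-v)$, both of which stay inside $\Delta(G)$.

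For the backward direction, I use Theorem \ref{Mahadev} and show $G$ contains no induced $G_1=2K_2$, $G_2=P_4$, or $G_3=C_4$. Suppose it did, on four vertices with labels $p_1<p_2<p_3<p_4$ in the shifted ordering. In each of $G_1, G_2, G_3$, every vertex has induced degree at least $1$; in particular, $p_1$ is incident to some edge $\{p_1,p_b\}$. The edge-growth observation then forces $p_b$ to be adjacent to every $p_c$ with $c>1$ and $c\neq b$, giving $p_b$ induced degree at least $3$. This contradicts the fact that every vertex in $G_1$, $G_2$, or $G_3$ has degree at most $2$.

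The main obstacle is in the forward induction, where the $D$ case requires a monotonic relabeling of the labels supplied by the induction hypothesis (shifting all old labels up by one) before attaching the new label $1$ to $v$. This is harmless because monotonic relabelings preserve shifted complexes and the isolated vertex lies in no clique of size $\geq 2$, so the bookkeeping is routine; but it is the one non-trivial technical point in the proof.
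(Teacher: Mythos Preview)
The paper does not give its own proof of this proposition; it is simply quoted from Klivans \cite[Theorem 2]{Carly} as a known structural result, so there is nothing in the paper to compare your argument against.

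Your argument is correct. The forward induction works exactly as you describe once the labeling convention is fixed (label $n$ for a new universal vertex, label $1$ for a new isolated vertex). In the $S$ case, any shift either stays inside $[n-1]$---where it is handled by the induction hypothesis, after stripping and reattaching the universal vertex when needed---or replaces some $i$ by $n$, which produces a clique because $n$ is universal. In the $D$ case the monotone relabeling is indeed the only technical point, and it is harmless: shiftedness is invariant under order-preserving relabelings, and since the isolated vertex lies in no face of size $\ge 2$, every nontrivial shift lives entirely inside the relabeled $\Delta(G-v)$.

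The backward direction via Theorem~\ref{Mahadev} together with your edge-growth observation is clean and correct: the smallest-labeled vertex $p_1$ has a neighbor $p_b$ among $\{p_2,p_3,p_4\}$ in the induced subgraph, and replacing $p_1$ by each of the two remaining labels inside the face $\{p_1,p_b\}$ forces $p_b$ to be adjacent in $G$ to all of $p_1,p_2,p_3,p_4$, giving induced degree $3$. This contradicts the fact that every vertex of $2K_2$, $P_4$, or $C_4$ has degree at most $2$.
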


Thanks to the previous result, we are able to show that the exterior and symmetric shifting yield the same complex for clique complexes of chordal graphs.

\begin{theorem}\label{TeoSameShift}
Let $G$ be a chordal graph. Suppose that $\Char(K)=0$.
Then $\Delta(G)^e=\Delta(G)^s.$
\end{theorem}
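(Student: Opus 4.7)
The plan is to show that both shiftings of $\Delta(G)$ are clique complexes of threshold graphs with the same clique vector as $G$, and then to conclude equality using the uniqueness of a shifted flag complex with prescribed $f$-vector.

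Since $G$ is chordal, Fr\"oberg's theorem (invoked in the introduction) ensures that $I_{\Delta(G)}$ has a $2$-linear free resolution. Applying Proposition \ref{PropPreservationBetta}, the ideals $I_{\Delta(G)^e}$ and $I_{\Delta(G)^s}$ share the graded Betti numbers of $I_{\Delta(G)}$, so they also admit $2$-linear free resolutions, and in particular are minimally generated in degree $2$. Therefore $\Delta(G)^e$ and $\Delta(G)^s$ are flag complexes: write $\Delta(G)^e = \Delta(G_e)$ and $\Delta(G)^s = \Delta(G_s)$ for graphs $G_e, G_s$ on $[n]$. Because $\Delta(G_e)$ and $\Delta(G_s)$ are shifted by the defining axiom of a shifting operation, Proposition \ref{TeoCarly} forces $G_e$ and $G_s$ to be threshold. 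Since shifting preserves the $f$-vector, the clique vectors satisfy $\mathbf{c}(G_e) = \mathbf{c}(G_s) = \mathbf{c}(G)$, and consequently $\mathbf{b}(G_e) = \mathbf{b}(G_s)$.

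The crux of the argument is to promote this equality of $b$-vectors to an equality of \emph{labeled} clique complexes on $[n]$. The bijection between $b$-vectors and unlabeled threshold graphs mentioned after Figure \ref{FigExThreshold} gives $G_e \cong G_s$ as abstract graphs; what remains is to verify that the particular labelings produced by the exterior and symmetric shiftings coincide. I would establish this by showing that a shifted flag simplicial complex on $[n]$ is uniquely determined by its $f$-vector: in a shifted graph on $[n]$, the shifted condition forces the lower neighbors of any vertex $i$ to be a consecutive interval of the form $\{j_0,\dots,i-1\}$ and the upper neighbors to be of the form $\{j_1,\dots,n\}$, so the whole graph is recovered from its non-decreasing degree sequence, and the degree sequence is in turn determined by the threshold construction encoded in the $b$-vector.

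The main obstacle is precisely this last step, the uniqueness of a shifted flag complex with a prescribed $f$-vector; once this combinatorial fact is in hand, the theorem follows immediately because $\Delta(G_e) = \Delta(G_s)$ forces $\Delta(G)^e = \Delta(G)^s$. An alternative route, hinted at by the paper's outline, is to invoke the explicit combinatorial shifting of Definition \ref{combishiftingraph} built from the structure of $G$ and to show that it simultaneously realizes both $\Delta(G)^e$ and $\Delta(G)^s$, thereby bypassing any appeal to a general uniqueness statement.
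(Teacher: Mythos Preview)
Your approach is essentially identical to the paper's: both reduce to showing that $\Delta(G)^e$ and $\Delta(G)^s$ are clique complexes of threshold graphs (via the $2$--linear resolution and Proposition~\ref{TeoCarly}) with the same $b$--vector, hence equal. The labeling subtlety you flag is real and is glossed over in the paper's proof, which simply asserts ``same $b$--vector, so $G_1=G_2$''; your sketch that a shifted graph on $[n]$ is determined by its degree sequence (hence by its $f$--vector) is exactly the missing justification, so your version is if anything more complete.
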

\begin{proof}
Since $G$ is a chordal graph, $I_{\Delta(G)}$ has a $2$--linear resolution.
Then,  $I_{(\Delta(G))^e}$ and $I_{(\Delta(G))^s}$ have a $2$--linear resolution.
Then,   $I_{(\Delta(G))^e}$ and $I_{(\Delta(G))^s}$ are the monomial edge ideals of certain graph.
As a consequence,
 $(\Delta(G))^e$ y $(\Delta(G))^s$ are clique complexes of certain graphs, say $G_1$ and $G_2$.
 By Theorem \ref{TeoCarly}, $G_1$ and $G_2$ are threshold graphs.
We obtain that
$$
f(\Delta(G_1))=f((\Delta(G))^e)=
f(\Delta)
=
f((\Delta(G))^e)=f(\Delta(G_2)).
$$
Then the clique vector of $G_1$ and $G_2$ are equal, concluding that $G_1$ and $G_2$ have the same $b$-vector, so $G_1=G_2$.
Hence
$\Delta(G_1)=\Shift_1(\Delta(G))=\Shift_2(\Delta(G))=\Delta(G_2).$
\end{proof}

As a consequence of Theorem \ref{TeoSameShift}, we show  Conjecture \ref{Conj} for clique complex of chordal graphs. We point out that the fact that $G^e=G^s$ was previously know \cite{Murai}.

\begin{proposition}\label{Prop e=s}
Suppose that $\Char(K)=0$. Then $\Delta(G)^e=\Delta(G)^s$ for every chordal graph.
As a consequence,
$(\Delta(G)^{(i)} )^e=(\Delta(G)^{(i)})^s$ and Conjecture \ref{Conj} holds for
$\Delta(G)^{(i)}$
for every $i\in\NN$.
\end{proposition}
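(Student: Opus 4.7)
The first claim $\Delta(G)^e=\Delta(G)^s$ is an immediate restatement of Theorem \ref{TeoSameShift}, so the substance of the proposition lies in extending this equality to the $t$-skeletons $\Delta(G)^{(t)}$ and in deducing Conjecture \ref{Conj} for those complexes. The plan is to combine Theorem \ref{TeoSameShift} with the well-known fact that both exterior and symmetric shifting commute with the skeleton operation.

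In more detail, for every simplicial complex $\Delta$ on $[n]$ and every $t\in\NN$ one has the identity
\[
(\Delta^{(t)})^e = (\Delta^e)^{(t)},
\]
and, under our assumption that $\Char(K)=0$, also
\[
(\Delta^{(t)})^s = (\Delta^s)^{(t)}.
\]
These reflect the fact that the generic initial ideal construction commutes with adjoining the $GL_n$-invariant component in degrees $\geq t+2$: the Stanley-Reisner ideal of $\Delta^{(t)}$ is obtained from $I_\Delta$ by adjoining every squarefree monomial of degree $t+2$, and this high-degree completion is preserved under the generic initial ideal, in both the exterior and the symmetric setting. Combining these identities with Theorem \ref{TeoSameShift} applied to the chordal graph $G$ gives
\[
(\Delta(G)^{(t)})^e = (\Delta(G)^e)^{(t)} = (\Delta(G)^s)^{(t)} = (\Delta(G)^{(t)})^s,
\]
as required.

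Once this equality is established, Conjecture \ref{Conj} for $\Delta=\Delta(G)^{(t)}$ is immediate: the two shifted complexes coincide, hence their Stanley-Reisner ideals are literally equal, and in particular $\beta_{i,j}(I_{\Delta^s}) = \beta_{i,j}(I_{\Delta^e})$ for every pair $(i,j)$, which is the conjectured inequality with equality.

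The only non-formal ingredient is the commutativity of shifting with the skeleton operation, a classical fact from shifting theory; as noted in the paragraph preceding the proposition, the graph case (i.e.\ $t=1$) was already handled by Murai \cite{Murai}, and the higher-skeleton case is the expected extension, so it can be quoted and plugged into the reduction. A fully self-contained argument would instead have to mimic the proof of Theorem \ref{TeoSameShift} directly on $\Delta(G)^{(t)}$; this is more delicate because $I_{\Delta(G)^{(t)}}$ is typically not linearly generated (it has quadratic generators from non-edges of $G$ together with degree-$(t+2)$ generators corresponding to the $(t+2)$-cliques of $G$), so Proposition \ref{PropPreservationBetta} does not apply directly and an \emph{ad hoc} $f$-vector comparison would be needed.
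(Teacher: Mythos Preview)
Your proposal is correct and follows essentially the same route as the paper: invoke Theorem \ref{TeoSameShift} for the first claim, then use the commutation of shifting with skeletons, $\Shift(\Delta^{(t)})=(\Shift(\Delta))^{(t)}$, to transfer the equality to every $t$-skeleton, from which Conjecture \ref{Conj} follows trivially by equality of the two shifted complexes. The paper's own proof is the one-line version of exactly this argument; your additional commentary on the generic initial ideal interpretation and on why a direct attack via Proposition \ref{PropPreservationBetta} would fail is accurate but not needed for the proof.
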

\begin{proof}
This is an immediate consequence from Theorem \ref{TeoSameShift}, and the fact that $\Shift(\Delta^{(i)})=(\Shift(\Delta))^{(i)}$ for every shifting operation $\Shift(-)$ and $i\in\NN.$
\end{proof}

\section{Chordal graphs}

In this section we prove our main results regarding chordal graphs. We start by recalling the definition and properties of a perfect elimination order.

A vertex $v$ of a graph $G$ is called \emph{simplicial} if its adjacency set $N(v)$ induces a clique. An \emph{elimination ordering} $\sigma$ of a graph $G$ is a bijection $\sigma:\{1,\ldots,n\}\to V(G)$. Thus, $\sigma(i)$ is the $i$th vertex in the elimination ordering and for $v\in V(G)$, $\sigma^{-1}(v)$ gives the position of $v$ in $\sigma$. A \emph{perfect elimination ordering} (PEO) is an
elimination ordering  $\sigma=(v_1,v_2,\ldots,v_n)$ where $v_i$ is a simplicial vertex in the
subgraph induced by $\{v_i,v_{i+1},\ldots,v_n\}$, $1\le i\le n$. Given a PEO $\sigma$ of a graph $G$, the \emph{monotone adjacency set} of $v$, denoted $N_{\sigma}(v)$, is given by
$$N_{\sigma}(v):=\{u\in N(v) \mbox{ }\vert \mbox{ } \sigma^{-1}(v)<\sigma^{-1}(u)\}$$ and denote $n_{\sigma}(v):=|N_{\sigma}(v)|$. For a
 maximal clique $C$ of $G$, let define  $$s(C):=\{x\in C \mbox{ }\vert \mbox{ } N_{\sigma}(x)\not\subseteq C\}.$$ For any set of vertices $U=\{u_{1},u_{2},\ldots,u_r\}$ of $G$, we say that $U$ is \emph{well ordered} if $\sigma^{-1}(u_{i})<\sigma^{-1}(u_{j})$ for every  $i,j\in \{1,\ldots,r\}$ with $i<j$.

As shown below in Theorem \ref{Dirac}, every nontrivial chordal graph has at least two
simplicial vertices.

\begin{theorem}[{\cite{D61}}]\label{Dirac}
Every chordal graph has a simplicial vertex. If $G$ is not
the complete graph, then it has two nonadjacent simplicial vertices.
\end{theorem}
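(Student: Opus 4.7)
The plan is to prove the stronger second statement---every non-complete chordal graph contains two nonadjacent simplicial vertices---since the first assertion then follows immediately (a complete graph has every vertex simplicial, and any incomplete chordal graph with at least one vertex is covered by the second part).

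The cornerstone of the argument is the following lemma: if $S$ is a minimal vertex separator of a chordal graph $G$, then $S$ induces a clique. I would prove it by contradiction. Suppose $x, y \in S$ are nonadjacent and that $S$ separates vertices $a, b$ into distinct components $A, B$ of $G - S$. By minimality of $S$, both $x$ and $y$ must have a neighbor in $A$ and a neighbor in $B$; otherwise one of them could be dropped from $S$. Choosing shortest $x$--$y$ paths through $A$ and through $B$ and concatenating them produces a cycle of length at least $4$ through $x$ and $y$. Because $A$ and $B$ lie in different components of $G - S$, no chord can cross between them; and the choice of shortest paths inside each of $A$ and $B$ excludes chords within each segment. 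Chordality is thereby contradicted.

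With the lemma in hand, I would induct on $|V(G)|$. The cases $|V(G)| \le 2$ are immediate. For the inductive step, take $G$ non-complete, fix nonadjacent $a, b \in V(G)$, pick a minimal $a,b$-separator $S$, and let $A, B$ be the connected components of $G - S$ containing $a, b$, respectively. Set $G_A = G[A \cup S]$ and $G_B = G[B \cup S]$; both are chordal as induced subgraphs of $G$. If $G_A$ is complete, then $a$ is simplicial in $G_A$; otherwise the induction hypothesis furnishes two nonadjacent simplicial vertices in $G_A$, and since $S$ is a clique by the lemma, at least one of these two vertices must lie in $A$. Either way I extract $a' \in A$ that is simplicial in $G_A$. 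The key observation is that $S$ separates $A$ from $V(G) \setminus (A \cup S)$, so $N_G(a') = N_{G_A}(a')$, which still induces a clique; hence $a'$ is simplicial in $G$ itself. A symmetric argument applied to $G_B$ produces $b' \in B$ simplicial in $G$. Since $a'$ and $b'$ sit in different components of $G - S$, they are nonadjacent, completing the induction.

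The main obstacle is the separator lemma: the chord-exclusion argument must be carried out carefully, using the minimality of $S$ together with the shortest-path choices to rule out both internal shortcuts within one of the paths and cross-component chords. Once the lemma is established, the rest of the proof is a clean induction, whose only subtlety is ensuring that the simplicial vertex produced inside each of $G_A$ and $G_B$ can be chosen to lie in $A$ or in $B$ rather than in $S$---which is exactly what the clique property of $S$ delivers.
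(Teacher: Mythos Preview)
Your argument is correct and is in fact the classical proof of Dirac's theorem: first show that minimal vertex separators in a chordal graph are cliques, then induct by splitting along such a separator and extracting a simplicial vertex from each side. The only points that require care---that the shortest-path cycle has no chords, that induction applies because $|A\cup S|<|V(G)|$, and that at least one of the two nonadjacent simplicial vertices of $G_A$ must avoid the clique $S$---are all handled appropriately in your outline.

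As for comparison with the paper: there is nothing to compare. The paper does not prove Theorem~\ref{Dirac}; it simply cites Dirac's original work~\cite{D61} and uses the statement as a black box in the construction of the perfect elimination ordering in Lemma~\ref{K_d}. Your proof is the standard one found in most graph-theory references.
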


We know recall a characterization of chordal graphs related to PEO.

\begin{theorem}[{\cite{FulkersonGross}}]\label{Fulkerson}
A graph is chordal if and only if it has a PEO.
\end{theorem}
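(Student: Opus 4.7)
The plan is to establish the two implications independently: the forward direction $(\Rightarrow)$ by induction on the order $n=|V(G)|$, and the reverse direction $(\Leftarrow)$ by a short contradiction argument that exploits the definition of PEO almost directly.

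For the $(\Rightarrow)$ direction, I would induct on $n$. The base cases $n\le 1$ are trivial. For the inductive step, I would apply Theorem \ref{Dirac} to obtain a simplicial vertex $v_1\in V(G)$. The induced subgraph $G-v_1$ is again chordal, since any chordless cycle of length $\ge 4$ in $G-v_1$ would already be a chordless cycle in $G$, contradicting chordality of $G$. By induction, $G-v_1$ admits a PEO $(v_2,\ldots,v_n)$, and concatenating gives $\sigma=(v_1,v_2,\ldots,v_n)$. By construction each $v_i$ with $i\ge 2$ is simplicial in the subgraph of $G-v_1$ induced by $\{v_i,\ldots,v_n\}$, which coincides with the subgraph of $G$ induced by $\{v_i,\ldots,v_n\}$ because $v_1\notin\{v_i,\ldots,v_n\}$; and $v_1$ is simplicial in $G$ by choice. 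Hence $\sigma$ is a PEO of $G$.

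For the $(\Leftarrow)$ direction, suppose $G$ admits a PEO $\sigma=(v_1,\ldots,v_n)$ and assume, for contradiction, that $G$ contains a chordless cycle $C=w_1w_2\cdots w_k w_1$ with $k\ge 4$. Choose the vertex $w_j=v_i$ of $C$ of smallest $\sigma$-index. Its two cycle-neighbors $w_{j-1}$ and $w_{j+1}$ (indices read modulo $k$) then both lie in $\{v_{i+1},\ldots,v_n\}$ by minimality, so they belong to $N_{\sigma}(v_i)$. Since $v_i$ is simplicial in the subgraph induced by $\{v_i,\ldots,v_n\}$, these two neighbors must be adjacent in $G$; this produces a chord of $C$ and contradicts the assumption that $C$ is induced.

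The conceptual content is genuinely concentrated in Theorem \ref{Dirac}: once a simplicial vertex is available at every stage, the forward direction is routine induction, and the reverse direction is a one-paragraph argument. The only point that requires a moment's care is the transfer of the PEO property from $G-v_1$ to $G$, which works cleanly because $v_1$ lies outside $\{v_i,\ldots,v_n\}$ for every $i\ge 2$; so there is no true obstacle, only bookkeeping around the simplicial-vertex result.
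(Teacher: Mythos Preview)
Your argument is correct and is precisely the standard proof of this classical result. Note, however, that the paper does not supply its own proof of Theorem~\ref{Fulkerson}: it is quoted as a background result from Fulkerson--Gross \cite{FulkersonGross} and used without demonstration, so there is no in-paper proof to compare your proposal against. Your forward direction via induction on $|V(G)|$ using Theorem~\ref{Dirac}, and your reverse direction via the least-indexed vertex of a hypothetical chordless cycle, are exactly the textbook arguments and need no modification.
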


The next result shows the existence of a special PEO satisfying properties that are useful for the rest of the paper.

\begin{lemma}\label{K_d}
Let $G$ be a chordal graph of order $n$, let $K=\{x_1,x_2\ldots,x_k\}$  be a maximal clique of $G$ and let $C$ be any maximal clique of $G$. Then there exists a PEO  $\sigma$ of $G$ such that:
\begin{itemize}
  \item [(a)]  $\sigma^{-1}(u)<\sigma^{-1}(x)$ for every $x\in K$ and every $u\in V(G)-K$, moreover $\sigma^{-1}(x_i)=n-i+1$ for every $i=1,\ldots,k$;
  \item [(b)]  $\sigma^{-1}(u)<\sigma^{-1}(v)$ for every $u\in C-s(C)$ and every $v\in s(C)$;
  \item [(c)] if $|s(C)|<i\le |C|$, then there exists a unique vertex $u\in C-s(C)$ such that $n_{\sigma}(u)=i-1$;
   \item [(d)] if $C'$ is another maximal clique of $G$ such that $C'\cap C\neq\emptyset$, then $\sigma^{-1}(u)<\sigma^{-1}(v)$ for every $u\in C\setminus C'$ and every $v\in C\cap C'$ or   $\sigma^{-1}(u)<\sigma^{-1}(v)$ for every $u\in C'\setminus C$ and every $v\in C\cap C'$.
\end{itemize}

\end{lemma}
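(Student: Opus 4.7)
The plan is to construct $\sigma$ explicitly from a rooted clique tree of $G$ and then verify (a)--(d) in turn. Let $T$ be a clique tree of $G$, which exists by Theorem~\ref{Fulkerson}, and root $T$ at $K$. For each maximal clique $M\neq K$ write $p(M)$ for its parent in $T$, and for each $v\in V(G)$ write $M(v)$ for the unique shallowest clique of $T$ containing $v$, so that $v\in M(v)\setminus p(M(v))$, with the convention $p(K)=\emptyset$. I process the maximal cliques in a post-order traversal of $T$ (deepest first, $K$ last), listing at step $M$ exactly the ``new'' vertices $M\setminus p(M)$; a standard argument on clique trees shows this yields a PEO of $G$. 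The crucial freedom is the choice of linear order within each step, which I make as follows: within each step $M\neq K$, list the vertices in $(M\setminus p(M))\cap C$ first and then those in $(M\setminus p(M))\setminus C$; at $K$'s step, label the vertices so that $\sigma^{-1}(x_i)=n-i+1$ and arrange them compatibly with the constraints derived below from (b) and (d).

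Conditions (a)--(c) then follow by direct analysis. Property (a) is built into the construction. For (b) I show $s(C)=C\cap p(C)$, with $s(K)=\emptyset$ when $C=K$. If $v\in C\setminus p(C)$ then $M(v)=C$, so $T_v$ is contained in the subtree of $T$ rooted at $C$; hence every neighbor of $v$ is processed at or before $C$'s step and $N_\sigma(v)\subseteq C$, giving $v\in C-s(C)$. Conversely, for $v\in C\cap p(C)$, maximality of $M(v)\neq C$ produces some $y\in M(v)\setminus C$: either $y\in M(v)\cap p(M(v))$ and is processed at a strictly higher ancestor step than $v$, or $y$ lies in $(M(v)\setminus p(M(v)))\setminus C$ and is processed after $v$ within $M(v)$'s step by the intra-step rule; in either case $v\in s(C)$. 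The positional inequality of (b) follows, since $C\setminus p(C)$ is processed at $C$'s step and $C\cap p(C)$ at strictly higher ancestor steps. Property (c) is then a formal consequence: enumerating $C$ in $\sigma$-order as $v_1,\ldots,v_{|C|}$, the first $|C|-|s(C)|$ vertices constitute $C-s(C)$; for each such $v_j$ the identity $n_\sigma(v_j)=|C|-j$ holds because $N_\sigma(v_j)\subseteq C\setminus\{v_j\}$ and $C$ is a clique, so the values $|s(C)|,|s(C)|+1,\ldots,|C|-1$ each occur exactly once.

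For (d), let $C'\neq C$ be a maximal clique with $C\cap C'\neq\emptyset$ and let $L$ be the least common ancestor of $C$ and $C'$ in $T$. If $L=C'$, each $u\in C\setminus C'$ has $T_u$ strictly below $C'$, so $M(u)$ is strictly deeper than $C'$ and $u$ is processed before any $v\in C\cap C'$; this gives (i). The symmetric case $L=C$ gives (ii). The remaining case is when $L$ is a strict common ancestor of both; then the subtree property of $T$ forces $C\cap C'\subseteq L$, and any vertex of $C\setminus C'$ (respectively $C'\setminus C$) lying outside $L$ is already processed before $C\cap C'$. The crux is to handle the vertices inside $L$: one must show that the intra-step order at $L$ (and at higher ancestors of $L$) can be chosen to realise, for this particular $C'$, one of (i) or (ii), \emph{and} that a single global choice of these intra-step orders works simultaneously for every $C'$ in this case. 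The expected obstacle of the whole proof is exactly this joint feasibility: it is resolved by grouping the vertices of $L\setminus p(L)$ by their ``branch signature'' -- the set of children of $L$ whose subtrees contain them -- and ordering the groups so that any vertex belonging to $C\cap C'$ for some such $C'$ is preceded within $L\setminus p(L)$ by at least one vertex from $(C\setminus C')\cup(C'\setminus C)$, using that the child of $L$ containing $C$ is common to all such $C'$.
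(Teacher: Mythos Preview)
Your construction is essentially the paper's, phrased through a clique tree: the paper's iterative removal of simplicial packets $P_i$ is exactly a post-order traversal of a clique tree rooted at $K$, with the packet at node $M$ being $M\setminus p(M)$. Your route to (b)--(c) via the identification $s(C)=C\cap p(C)$ is tidier than the paper's direct manipulation of the $P_i$'s, though the sentence ``every neighbor of $v$ is processed at or before $C$'s step'' is backwards---neighbors of $v\in C\setminus p(C)$ that lie in $C\cap p(C)$ are processed at \emph{later} (ancestor) steps; the point is rather that any neighbor processed after $v$ must lie in $C$, because $v$'s clique-subtree is rooted at $C$.

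The genuine gap is in (d), in the case where the LCA $L$ is a strict ancestor of both $C$ and $C'$. Two problems. First, the target you write down---that each $v\in C\cap C'$ be ``preceded by at least one vertex from $(C\setminus C')\cup(C'\setminus C)$''---is strictly weaker than (d), which requires that \emph{all} of one of the two sides precede all of $C\cap C'$. Second, your intra-step rule can make the genuine condition (d) unattainable. Take $L=\{1,2,3\}$ with children $C=P=\{1,2,4\}$, $Q=\{1,3,5\}$, $Q'=\{2,3,6\}$ in the clique tree, and choose $K$ to be a large clique attached to $L$ at the single vertex $3$ (so $L\neq K$, $p(L)=K$, and $L\setminus p(L)=\{1,2\}$). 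Property~(a) forces $3$ after both $1$ and $2$. For $C'=Q$ one has $C\cap C'=\{1\}$; option (ii) needs $3$ before $1$ and is dead, so (i) forces $\sigma^{-1}(2)<\sigma^{-1}(1)$. For $C'=Q'$ one has $C\cap C'=\{2\}$; option (ii) needs $3$ before $2$ and is dead, so (i) forces $\sigma^{-1}(1)<\sigma^{-1}(2)$. No order on the two-element block $L\setminus p(L)=\{1,2\}$ meets both constraints, and since $1,2\in C$ your ``$C$-members first'' rule and any branch-signature refinement of it do not even distinguish them. So the joint-feasibility claim fails here; the difficulty you flagged is real and is not resolved by the mechanism you propose.

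By contrast, the paper does not do an LCA case split at all: it argues (d) for its $\sigma$ directly from the PEO property, analysing for each $v\in C\cap C'$ whether $N_\sigma(v)$ meets $C'\setminus C$, meets $C\setminus C'$, or lies inside $C\cap C'$, and propagating the resulting inequality to every other $v'\in C\cap C'$. That argument uses only that $N_\sigma(v)$ is a clique and the inclusion $N_\sigma(v)\subseteq N_\sigma(v')$ when $\sigma^{-1}(v')<\sigma^{-1}(v)$, and does not rely on any particular intra-block order.
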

\begin{proof}
We construct a  PEO  $\sigma$ of $G$ as follows.  Let $\mathcal{C}$ be the set of maximal cliques of $G$ and suppose that $|\mathcal{C}|=r$.  There exists two nonadjacent simplicial vertices in $G$ by Theorem \ref{Dirac}. Then, one of them is a vertex $u_1\in V(G)-K$. Suppose that $u_1\in C_1$ for some $C_1\in \mathcal{C}\setminus\{K\}$. Set $P_1=\{u\in C_1 \mbox{ }\vert \mbox{ } u \mbox{ is simplicial in } G\}$.
We note that the graph $G_2=G-P_1$ is chordal and  for every $C\in \mathcal{C}-{C_1}$, $C$ is a maximal clique in $G_2$. By Theorem \ref{Dirac}, there exists a vertex $u_2\in V(G_2)-K$ simplicial in $G_2$. Suppose that $u_2\in C_2$ for some $C_2\in \mathcal{C}\setminus\{K,C_1\}$.  Let $P_2=\{u\in C_2 \mbox{ }\vert \mbox{ } u \mbox{ is simplicial in } G_2\}$. Proceeding in this way for every $i=3,4,\ldots,r-1$, we obtain that
the graph  $G_i=G_{i-1}-P_{i-1}$ is chordal. In addition,  $C$ is a maximal clique in $G_i$ for every $C\in \mathcal{C}\setminus\{C_1,C_2,\ldots,C_{i-1}\}$.
 Then, by Theorem \ref{Dirac}, there exists a vertex $u_i\in G_i-K$ simplicial in $G_i$. Suppose that $u_i\in C_i$ for some $C_i\in \mathcal{C}-\{K,C_1,C_2,\ldots,C_{i-1}\}$. Set $P_i=\{u\in C_i \mbox{ }\vert \mbox{ } u \mbox{ is simplicial in } G_i\}$. We define $\sigma$ as follows. For every $i,j=1,\ldots,r-1$ with $i<j$, every $p_i\in P_i$ and every $p_j\in P_j$, we take  $\sigma^{-1}(p_i)<\sigma^{-1}(p_j)$ and  $\sigma^{-1}(x_i)=n-i+1$ for every $i=1,\ldots,k$. Then, $\sigma$ is a PEO of $G$ by construction.
We now show that $\sigma$ satisfies the condition (a)-(d).
\begin{itemize}
  \item [(a)]  It follows by the construction of $\sigma$.

  \item [(b)] We notice that $\mathcal{C}=\{C_1,C_2\ldots,C_r\}$ where $C_r=K$. By construction of $\sigma$, the sets $S_i$ and $s(C_i)$ are a partition of $C_i$  for every $i=1,\ldots,r$. 
  Suppose that  $C=C_i$ for some $i=1,\ldots,r$. If $i=r$, then $s(C)=\emptyset$. Then, we can assume that $i=1,\ldots,r-1$. Let $v\in s(C)$. By definition of the set $s(C)$, $N_{\sigma}(v)\not\subseteq C$. Therefore, there exists a vertex $z\in N_{\sigma}(v)$ such that $z\not\in C$. Since $\sigma^{-1}(v)<\sigma^{-1}(z)$ and $z\not\in C$, it follows that $z\in \bigcup\limits_{j=i+1}^{r}C_j$. Then, by construction of $\sigma$, we have that $\sigma^{-1}(u)<\sigma^{-1}(z)$ for every $u\in S_i=C\setminus s(C)$.
  
We proceed  by contradiction.   Suppose that there exits a vertex $u\in C\setminus s(C)$ with $\sigma^{-1}(v)<\sigma^{-1}(u)$. Then, $u\in N_{\sigma}(v)$. Since $N_{\sigma}(v)$ is a clique, we have that $uz\in E(G)$. As $z\not\in C$ and $N_{\sigma}(u)\subseteq C$, we have that $z\not\in N_{\sigma}(u)$. Since $uz\in E(G)$, we conclude that $u\in N_{\sigma}(z)$.
This means that $\sigma^{-1}(z)<\sigma^{-1}(u)$, a contradiction.

  \item [(c)] Suppose that $C=\{u_1,u_2,\ldots,u_{|C|}\}$ is well ordered. By (b) it follows that $u_j\not\in s(C)$ for every $j=1,\ldots,|C|-|s(C)|$. Since $N_{\sigma}(u_j)\subset C$ for every $j=1,\ldots,|C|-|s(C)|$, we have that $n_{\sigma}(u_j)=|C|-j$. Therefore, there exists a unique vertex $u\in C \setminus s(C)$ such that $n_{\sigma}=i-1$, because   $|s(C)|<i\le |C|$.

   \item [(d)] Let $v\in C\cap C'$ . We consider the following cases.

\noindent\underline{\emph{ Case (i):  $N_{\sigma}(v)\cap (C'\setminus C)\neq\emptyset$.}}
  In this case $N_{\sigma}(v)\cap (C\setminus C')=\emptyset$, because $N_{\sigma}(v)$ is a clique. 
  We deduce that $\sigma^{-1}(u)<\sigma^{-1}(v)$ for every $u\in C\setminus C'$, since $u\not\in N_{\sigma}(v)$. Let $v'\in C\cap C'$ be any other vertex different from $v$. If $v'\in N_{\sigma}(v)$, then $\sigma^{-1}(u)<\sigma^{-1}(v)<\sigma^{-1}(v')$  for every $u\in C\setminus C'$. If $v'\not\in N_{\sigma}(v)$. As $N_{\sigma}(v)\subset N_{\sigma}(v')$, it follows that $N_{\sigma}(v')\cap (C'\setminus C)\neq\emptyset$. Then, by a similar argument applied to $v'$, we conclude that
 $\sigma^{-1}(u)<\sigma^{-1}(v')$ for every $u\in C\setminus C'$.

\noindent  \underline{\emph{Case (ii): $N_{\sigma}(v)\cap (C\setminus C')\neq\emptyset$.}} 
The proof is analogous to Case (i).

\noindent\underline{\emph{Case (iii): $N_{\sigma}(v)\subset (C\cap C')$.}}
 In this case, $\sigma^{-1}(u)<\sigma^{-1}(v)$ for every $u\in C\cup C'\setminus (C\cap C')$.  Let $v'\in C\cap C'$ be any other vertex different from $v$. If $v'\in N_{\sigma}(v)$, then $\sigma^{-1}(u)<\sigma^{-1}(v)<\sigma^{-1}(v')$ for every $u\in C\cup C'\setminus (C\cap C')$. If $v'\not\in N_{\sigma}(v)$, we assume  that $N_{\sigma}(v')\cap (C'\setminus C)\neq\emptyset$. Then by a similar argument to Case (i), we have that $\sigma^{-1}(u)<\sigma^{-1}(v')$ for every $u\in C\setminus C'$. The case when $N_{\sigma}(v')\cap (C\setminus C')\neq\emptyset$ is analogous. The case $N_{\sigma}(v')\subset C\cap C'$ implies that $\sigma^{-1}(u)<\sigma^{-1}(v')$ for every $u\in C\cup C'\setminus(C\cap C')$.

We conclude, in any case, that $\sigma^{-1}(u)<\sigma^{-1}(v)$ for every $u\in C\setminus C'$ and every $v\in C\cap C'$ or   $\sigma^{-1}(u)<\sigma^{-1}(v)$ for every $u\in C'\setminus C$ and every $v\in C\cap C'$.
\end{itemize}
\end{proof}


\subsection{A combinatorial shifting of chordal graphs}
In this subsection, we define a combinatorial  procedure which takes chordal graphs to threshold graphs preserving the clique complex.
This procedure results  a shifting operation which allows us  to have a combinatorial description of shifting in order to use in our results
(for a different  study of combinatorial shifting of chordal graphs we refer to Murai's work on shifting for graphs \cite{Murai}).

\begin{definition}\label{combishiftingraph}
Let $G$ be a chordal graph with order $n$ and let $K_d=\{x_1,x_2\ldots,x_d\}$ be a maximum clique of $G$.
Let  $\sigma$ be a PEO  of $G$ that satisfies the conditions of Lemma \ref{K_d} for $G$ and $K_d$ (in particular $\sigma^{-1}(x_i)=n-i+1$ for every $i=1,\ldots,d$).
 For every
 vertex $u\in V(G)\setminus K_d$ suppose that $N_{\sigma}(u)=\{u_{1},u_{2},\ldots,u_{n_{\sigma}(u)}\}$ is well ordered.
 We define a map $\alpha_{\sigma}$  from $E(G)$ to the set of all $2$--subsets of $V(G)$ as follows:
\begin{itemize}
  \item  [(a)] $\alpha_{\sigma}(uv)=uv$ for every edge $uv\in E(K_d)$.
  \item  [(b)] $\alpha_{\sigma}(uu_{i})=ux_i$ for each vertex $u\in V(G)\setminus K_d$ and every $i=1,\ldots,n_{\sigma}(u)$.
\end{itemize}
We define  the graph $\alpha_{\sigma}(G):=(V(G),\alpha_{\sigma}(E(G)))$.
\end{definition}

Figure \ref{chordalthreshold} shows a chordal graph $G$ and the graph $\alpha_{\sigma}(G)=T$. The labeling in the figure corresponds to a PEO $\sigma$ of $G$ that satisfies the conditions of Lemma \ref{K_d}. Vertices $7,8,9,10$ corresponds to $K_d=\{x_1=\sigma(10),x_2=\sigma(9),x_3=\sigma(8),x_4=\sigma(7)\}$. For instance, we observe that if $u\in V(G)\setminus K_d$ is such that $u=\sigma(2)$, as $N_{\sigma}(u)=\{u_{1}=\sigma(3),u_{2}=\sigma(4)\}$ then $ux_1$ and $ux_2$ are edges in $T$. On the other hand, we may observe that the PEO $\sigma$ of $G$ in Figure \ref{chordalthreshold2}  also satisfies the conditions of Lemma \ref{K_d}.

\begin{figure}[htb]
\begin{center}
 \includegraphics[width=.7\textwidth]{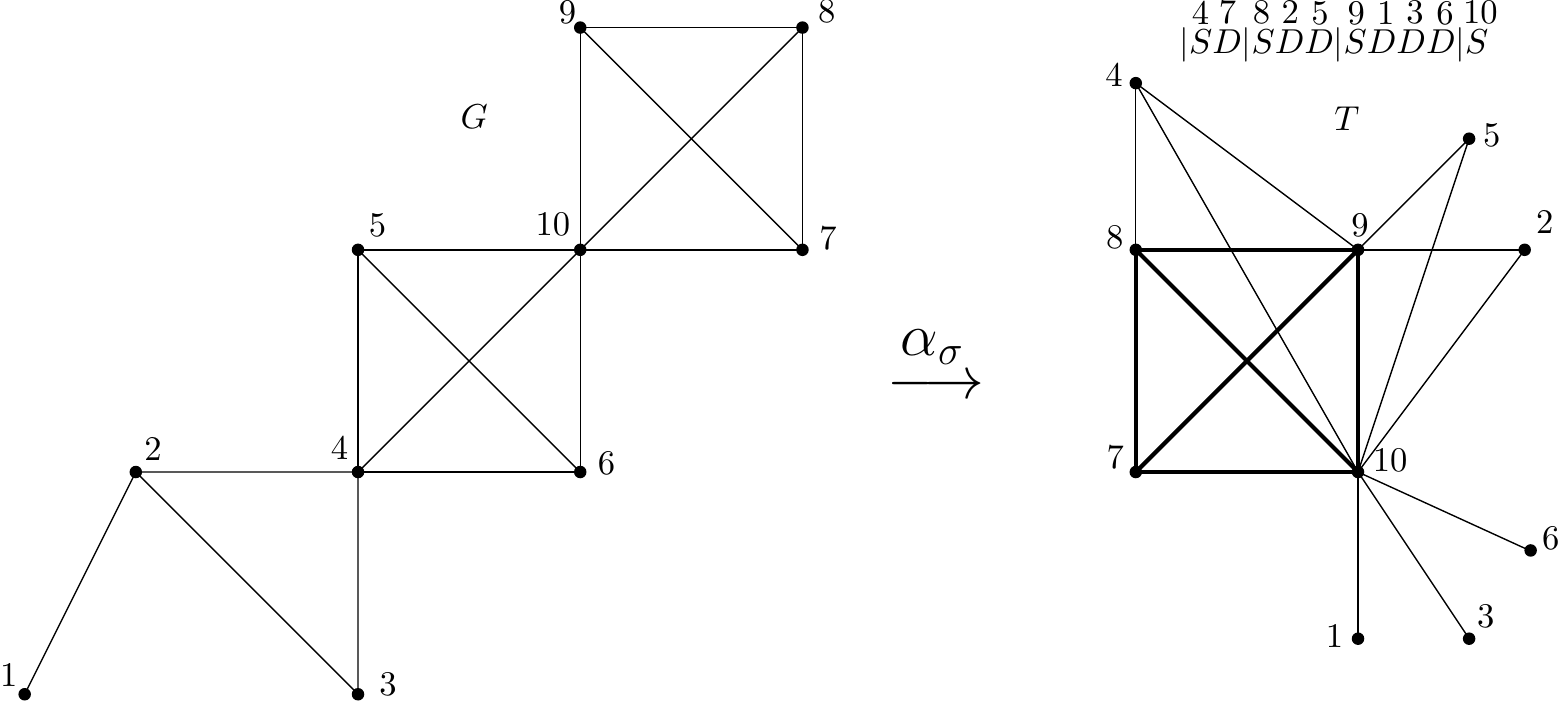}
\caption{ \label{chordalthreshold}}
\end{center}
\end{figure}

We now show that the previous graph shifting $\alpha_{\sigma}(G)$ yields the same $b$--vector of $G$. This is a key tool as it yield the same output as exterior or symmetric shifting. However, its construction allows to see that certain combinatorial properties are preserved.
\begin{theorem}\label{samecliquevector}
Let $G$ be a chordal graph and let $T=\alpha_\sigma(G)$ be as in Definition \ref{combishiftingraph}.
Then  $T$ is a threshold graph and both graphs have the same clique vector and $b$--vector.
\end{theorem}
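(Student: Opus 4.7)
The proof naturally divides into two steps: (i) verifying that $T$ is a threshold graph, and (ii) checking that $T$ and $G$ have the same clique vector. Equality of the $b$-vectors then follows from (ii), since $\mathbf{b}(G)$ is determined by $\mathbf{c}(G)$ through Equation~(\ref{eq1}).

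For step (i), my plan is to exhibit an explicit word in the alphabet $\{S,D\}$ whose associated threshold graph coincides with $T$. The construction $\alpha_\sigma$ endows $T$ with a rigid shape: $K_d$ induces a clique, the outside set $V(G)\setminus K_d$ is independent because every edge produced by $\alpha_\sigma$ meets $K_d$, and each outside vertex $u$ is adjacent precisely to the prefix $\{x_1,\ldots,x_{n_\sigma(u)}\}$ of $K_d$. Grouping outside vertices by level $A_k=\{u\in V(G)\setminus K_d : n_\sigma(u)=k\}$, I will form the word by listing, from left to right, the blocks $A_d,\,x_d,\,A_{d-1},\,x_{d-1},\,\ldots,\,A_1,\,x_1,\,A_0$, with each $x_j$ an $S$ and each outside vertex a $D$ (the leading letter, required to be $S$ by convention, is realized either by the first element of $A_d$, or by $x_d$ itself when $A_d=\emptyset$). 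A direct pass through each $S$-operation shows that $x_j$ connects back to $\{x_{j+1},\ldots,x_d\}\cup A_d\cup\cdots\cup A_j$ and to nothing else, while the tail block $A_0$ stays isolated because no $S$ follows it; this recovers the edge set of $T$.

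For step (ii), I count $i$-cliques on both sides using the PEO $\sigma$. Every clique $C$ of $G$ has a unique $\sigma$-minimal vertex $v$; since $\sigma$ is a PEO, $N_\sigma(v)$ is itself a clique of $G$, so the $i$-cliques with $\sigma$-minimum $v$ are in bijection with the $(i-1)$-subsets of $N_\sigma(v)$. Hence
\[
c_i(G)=\sum_{v\in V(G)}\binom{n_\sigma(v)}{i-1}.
\]
On $T$, the independence of the outside vertices forces every clique either to be a subset of $K_d$ or to be of the form $\{u\}\cup S$ with $u\in V(G)\setminus K_d$ and $S\subseteq\{x_1,\ldots,x_{n_\sigma(u)}\}$, so
\[
c_i(T)=\binom{d}{i}+\sum_{u\notin K_d}\binom{n_\sigma(u)}{i-1}.
\]
By Lemma~\ref{K_d}(a), $\sigma^{-1}(x_j)=n-j+1$, which forces $n_\sigma(x_j)=j-1$, and the hockey-stick identity gives $\sum_{j=1}^{d}\binom{j-1}{i-1}=\binom{d}{i}$. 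This matches the $K_d$-contribution to $c_i(G)$ against the binomial term in $c_i(T)$, while the remaining sums over outside vertices agree termwise, so $c_i(G)=c_i(T)$ for every $i$.

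The main obstacle I anticipate is the bookkeeping in the word construction of step (i): one must ensure that the leading $S$-convention is honoured, that the $A_0$-block is placed after every $S$-operation so its vertices remain isolated, and that no inter-level edges are introduced. Once the ordering is fixed, each $S$-operation can be audited directly, and the clique-counting identity in step (ii) then reduces to a single application of the hockey-stick identity via Lemma~\ref{K_d}(a).
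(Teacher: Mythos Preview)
Your proof is correct and proceeds by a genuinely different route from the paper on both halves.

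For the threshold property, the paper invokes Theorem~\ref{Mahadev} and checks that $T$ contains no induced $C_4$, $2K_2$, or $P_4$; your explicit word construction is more direct and, as a bonus, makes the $b$--vector of $T$ immediately visible (the $(d-i+1)$-th subword is $x_i$ followed by the block $A_{i-1}$, so $b_i=1+|A_{i-1}|$). One minor simplification: since $\{u\}\cup N_\sigma(u)$ is a clique for every $u\notin K_d$, one has $n_\sigma(u)\le d-1$ and hence $A_d=\emptyset$ always, so the word necessarily begins with $x_d$ and your parenthetical about the first element of $A_d$ is moot.

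For the clique vector, the paper shows that $\alpha_\sigma$ extends to a bijection between $r$--cliques of $G$ and $r$--cliques of $T$ and verifies injectivity by a case analysis on whether the clique meets $K_d$. Your double count, grouping cliques by their $\sigma$--minimal vertex and collapsing the $K_d$ contribution via the hockey-stick identity $\sum_{j=1}^d\binom{j-1}{i-1}=\binom{d}{i}$, is essentially the same bijection read off numerically; it is shorter because it bypasses the injectivity check. Both arguments rest on the single structural fact that $N_\sigma(v)$ is a clique, but your packaging is cleaner and avoids the bookkeeping of tracking indices $j_2,\ldots,j_r$ inside $N_\sigma(u_1)$.
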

\begin{proof}

 We note, by the definition of $\alpha_{\sigma}$,  that $T$ is conformed two parts. First, the maximum clique $K_d=\{x_1,x_2\ldots,x_d\}$,  where we may assume that $\sigma^{-1}(x_i)=n-i+1$ for every $i=1,\ldots,d$. Second,  additional vertices $v$ adjacent to $K_d$ such that if $v\in V(T)\setminus K_d$ with $\deg(v)=i$.  
Then, $N(v)=\{x_1,x_2\ldots,x_i\}$.

 In order to prove that $T$ is threshold, it is enough by Theorem \ref{Mahadev} to show that $T$ does not contain graphs $C_4$, $2K_2$ or $P_4$ as an induced subgraph. It is not difficult to see that $T$ does not contain the graphs $C_4$ or $2K_2$ as an induced subgraph.  By means of contradiction, suppose that $T$  contains the graph $P_4=p_1,p_2,p_3,p_4$ as an induced subgraph. By the construction of $T$ we notice that $|P_4\cap K_d|\ge2$. If $|P_4\cap K_d|\ge3$ we would have a triangle in the induced subgraph $P_4$ which is a contradiction. Therefore, $|P_4\cap K_d|=2$, say 
  $P_4\cap K_d=\{ p_2,p_3\}$. Then,  $p_1,p_4\not\in K_d$. With out loss of generality , we assume that $\deg(p_1)\le \deg(p_4)$.
Then, $N(p_1)\subseteq N(p_4)$  by  definition of the graph $T$. We conclude that $p_2p_4\in E(T)$, a contradiction. Similar arguments holds if another two vertices of $P_4$ are in $K_d$. Therefore  the graph $T$ is threshold.

We now show that $G$ and $T$ have the same clique vector, i.e,  $\mathbf{c}(G)=\mathbf{c}(T)$. We note that $|V(G)|=|V(T)|$, and so, $\mathbf{c}(G)$ and $\mathbf{c}(T)$ coincide in its first coordinate.  
We now prove that $\alpha_{\sigma}$ induces a bijection between the set of $r$--cliques of $G$ and the set of $r$--cliques of $T$ for each $r=2,3,\ldots,d$.
 Let $C_u=\{u_1,u_2,\ldots,u_r\}$ be a well ordered $r$--clique of $G$ not contained in $K_d$.
Note that if $u_i\in K_d$ for some $i\in \{1,\ldots,r\}$. Then, $u_{j}\in K_d$ for every $j>i$, because $\sigma^{-1}(u_i)<\sigma^{-1}(u_j)$ and
 $\sigma^{-1}(z)<\sigma^{-1}(x)$ for every $z\in V(G)\setminus K_d$ and every $x\in K_d$. Hence, $u_1\not\in K_d$. Consider $N_{\sigma}(u_1)=\{u_1',u_2',\ldots,u_{n_{\sigma}(u_1)}'\}$, which is well ordered. As
 $\{u_2,u_3\ldots,u_r\}\subseteq N_{\sigma}(u_1)$, there exists $\{j_2,j_3,\ldots,j_r\}\subseteq \{1,\ldots,n_{\sigma}(u_1)\}$ such that $u_{j_q}'=u_q$ for every $q=2,3,\ldots,r$. Therefore, the map $\alpha_{\sigma}$ sends the clique $C_u$ in $G$ to a unique clique $\alpha_{\sigma}(C_u)=\{u_1,x_{j_2},x_{j_3},\ldots,x_{j_r}\}$ in $T$. Let $C_v$ be any $r$--clique  of $G$ such that  $\alpha_{\sigma}(C_u)=\alpha_{\sigma}(C_v)$.  We now show that $C_u=C_v$.
 We observe first that $C_v$ is not contained in $K_d$. 
 Otherwise,  we would have that $\{u_1,x_{j_2},x_{j_3},\ldots,x_{j_r}\}\subseteq K_d$, because the map $\alpha_{\sigma}$ is the identity in $K_d$. This yields a contradiction because $u_1\not\in K_d$. 
 
 Consider the well-ordered sets  $C_v=\{v_1,v_2\ldots,v_r\}$ and  $N_{\sigma}(v_1)=\{v_1',v_2',\ldots,v_{n_{\sigma}(v_1)}'\}$.  Then $v_1\not\in K_d$. Since
  $\{v_2,v_3\ldots,v_r\}\subseteq N_{\sigma}(v_1)$, there exists $\{i_2,i_3,\ldots,i_r\}\subseteq \{1,\ldots,n_{\sigma}(v_1)\}$ such that $v_{i_q}'=v_q$ for every $q=2,3,\ldots,r$. Therefore, the map $\alpha_{\sigma}$ sends the clique $C_v$ in $G$ to a unique clique $\alpha_{\sigma}(C_v)=\{v_1,x_{i_2},x_{i_3},\ldots,x_{i_r}\}$ in $T$.  As $\alpha_{\sigma}(C_u)=\alpha_{\sigma}(C_v)$, it follows that $u_1=v_1$. Since
 $\sigma$ is a bijection $\sigma:\{1,\ldots,n\}\to V(G)$, we conclude that $i_q=j_q$ for every $q=2,3,\ldots,r$. This means $C_u=C_v$. 
 
 Finally,  we have that $\alpha_{\sigma}(C)\neq\alpha_{\sigma}(C')$ for any distinct $i$--cliques $C$ and $C'$, both contained in $K_d$. Therefore $\alpha_{\sigma}$ induces a bijection between the set of $r$--cliques of $G$ and the set of $r$--cliques of $T$ for each $r=2,3,\ldots,d$. We conclude that $\mathbf{c}(G)=\mathbf{c}(T)$. Hence, $\mathbf{b}(G)=\mathbf{b}(T)$.
\end{proof}

\begin{corollary}\label{corocombishift}
Let $G$ be a chordal graph and let $T=\alpha_\sigma(G)$ be as in Definition \ref{combishiftingraph}. Then  $\Delta(T)=\Delta(G)^e$. In particular, $\kappa(G)=\kappa(T)$.
\end{corollary}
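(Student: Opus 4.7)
The plan is to argue that both $\Delta(T)$ and $\Delta(G)^e$ are clique complexes of threshold graphs sharing the same $b$--vector, and then invoke Goodarzi's bijection between $b$--vectors and threshold graphs to identify them.

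First, since $T = \alpha_\sigma(G)$ is a threshold graph by Theorem \ref{samecliquevector}, Proposition \ref{TeoCarly} gives that $\Delta(T)$ is shifted. On the other side, $\Delta(G)^e$ is shifted by the very definition of a shifting operation. Next I would observe that $\Delta(G)^e$ is itself the clique complex of a threshold graph: because $G$ is chordal, $I_{\Delta(G)}$ has a $2$--linear resolution, and Theorem \ref{ThmShiftProp} together with the preservation of regularity (and of the generating degree) forces $I_{\Delta(G)^e}$ to have a $2$--linear resolution as well. Hence $I_{\Delta(G)^e}$ is the edge ideal of a graph $G_1$, and $\Delta(G)^e = \Delta(G_1)$ is shifted, so by Proposition \ref{TeoCarly} the graph $G_1$ is threshold.

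The second step is to compare $f$--vectors. Shifting preserves $f$--vectors, so $f(\Delta(G_1)) = f(\Delta(G)^e) = f(\Delta(G))$. Theorem \ref{samecliquevector} gives $f(\Delta(T)) = f(\Delta(G))$ as well. Since the $b$--vector is determined by the clique vector through equation \eqref{eq1}, the threshold graphs $T$ and $G_1$ have the same $b$--vector. Goodarzi's bijection between $b$--vectors and threshold graphs (recalled in the paragraph following the threshold graph definition) then yields $T = G_1$, and therefore $\Delta(T) = \Delta(G_1) = \Delta(G)^e$, as required.

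For the consequence $\kappa(G) = \kappa(T)$, the quickest route goes through depth: Theorem \ref{ThmShiftProp}(1) gives $\Depth(R/I_{\Delta(G)}) = \Depth(R/I_{\Delta(G)^e}) = \Depth(R/I_{\Delta(T)})$, and since both $G$ and $T$ are chordal, Corollary \ref{CorDepth} converts these equal depths into equal connectivities $\kappa(G)+1 = \kappa(T)+1$. Alternatively, one may simply note that Proposition \ref{dominate}(a) expresses $\kappa(T)$ as the largest index $i$ with $b_i = 1$ in the $b$--vector of $T$, and this index coincides with $\kappa(G)$ by Goodarzi's theorem (quoted in the introduction) applied to the chordal graph $G$, whose $b$--vector equals that of $T$ by Theorem \ref{samecliquevector}.

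The only genuinely delicate point is ensuring that $\Delta(G)^e$ really is a clique complex of some graph (as opposed to a more general shifted complex). This is where chordality enters decisively via the $2$--linear resolution property: without it, $I_{\Delta(G)^e}$ would not be generated in degree $2$ and the identification of $\Delta(G)^e$ with $\Delta(G_1)$ would fail. Once this hurdle is cleared, the proof is essentially the same argument as in Theorem \ref{TeoSameShift} but with $\Delta(G)^s$ replaced by $\Delta(T)$, and in particular works in any characteristic.
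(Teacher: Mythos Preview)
Your proposal is correct and follows essentially the same route as the paper: both argue that $\Delta(T)$ and $\Delta(G)^e$ are shifted clique complexes of threshold graphs with the same clique vector (hence the same $b$--vector), and then invoke the bijection between $b$--vectors and threshold graphs; the connectivity statement is likewise deduced from preservation of depth under shifting together with Corollary~\ref{CorDepth}. If anything, you are more careful than the paper's corollary proof at one point: the paper simply asserts, via Proposition~\ref{TeoCarly}, that $\Delta(G)^e=\Delta(T')$ for some threshold $T'$, whereas Proposition~\ref{TeoCarly} only characterizes when a \emph{clique complex} is shifted, not when an arbitrary shifted complex is a clique complex---you close this gap explicitly using the $2$--linear resolution argument (which the paper spells out in the proof of Theorem~\ref{TeoSameShift} but not here).
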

\begin{proof}
By Theorems \ref{TeoCarly} and \ref{samecliquevector}, $\Delta(T)$ is shifted and $T$ is threshold respectively. We also have that $\Delta(G)^e$ is shifted. By Theorem \ref{TeoCarly}, there exists a threshold graph $T'$ such that  $\Delta(G)^e=\Delta(T')$. Then, $T'$ and $G$ have the same clique vector.   By Theorem \ref{samecliquevector}, $T$ and $G$ also have the same clique vector.
Since $T$ and $T'$ are threshold graphs with the same clique vector, they also have the same $b$--vector. Then, $T=T'$ and   $\Delta(T)=\Delta(G)^e$.
  Since exterior  shifting operation preserves depth by Theorem \ref{ThmShiftProp}, we
   have that $\Depth(R/I_{\Delta(G)})=\Depth(R/I_{\Delta(T)})$.
   Since $G$ and $T$ are chordal graphs, then $I_{\Delta(G)}$ and $I_{\Delta(T)}$ have $2$-linear resolutions \cite[Theorem 1]{Fro}. We conclude that
   $\kappa(G)=\kappa(T)$ by Corollary \ref{CorDepth}.
\end{proof}

\begin{notation}
Let us define $\widetilde{\kappa}(G):=\max\{|C\cap C'|\mbox{ }\vert \mbox{ }C  \mbox{ and } C' \mbox{ are maximal cliques of $G$ }\},$
\end{notation}

 We note that $\kappa(G)\le \widetilde{\kappa}(G)$.
Next, we  show  that $d_i(T)\le d_i(G)$ and the equality holds for $i> \widetilde{\kappa}(G)$.

\begin{theorem}\label{D_i}
Let $G$ be a chordal graph and let $T=\alpha_\sigma(G)$ as in Definition \ref{combishiftingraph}. Then,
\begin{itemize}
  \item [(a)] $d_i(T)\le d_i(G)$ for every $i=1,\ldots,d$.
    \item [(b)] $d_i(T)=d_i(G)$ for every $i> \widetilde{\kappa}(G)$.
\end{itemize}
\end{theorem}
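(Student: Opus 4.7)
My plan begins with part (b), which has the cleaner statement. Fix $i > \widetilde{\kappa}(G)$. If an $i$-clique $D$ of $G$ lay in two distinct maximal cliques $M_1 \ne M_2$, then $|M_1 \cap M_2| \ge i > \widetilde{\kappa}(G)$, contradicting the definition of $\widetilde{\kappa}(G)$. Hence every $k$-clique of $G$ with $k \ge i$ lies in a unique maximal clique, so $c_k(G) = \sum_{M \text{ maximal},\, |M| \ge k}\binom{|M|}{k}$. Substituting this into the inversion of (\ref{eq1}), $b_i(G) = \sum_{k \ge i}(-1)^{k-i}\binom{k-1}{i-1}c_k(G)$, and swapping the order of summation reduces the identity $b_i(G) = \sum_{j \ge i}|\mathcal{C}_j(G)|$ to the elementary binomial identity
$$\sum_{k=i}^{m}(-1)^{k-i}\binom{k-1}{i-1}\binom{m}{k} = 1\quad\text{for every }m \ge i,$$
which follows by a short induction on $m$. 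The same overlap argument forces $d_i(G) = \sum_{j \ge i}|\mathcal{C}_j(G)|$: each maximal clique of size $\ge i$ must be dominated by one of its own $i$-subsets, and distinct maximal cliques require distinct dominators. Combining these with the chain $d_i(T) = b_i(T) = b_i(G)$ from Proposition \ref{dominate}(c) and Theorem \ref{samecliquevector} yields $d_i(T) = d_i(G)$, proving (b).

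For part (a), the same chain $d_i(T) = b_i(T) = b_i(G)$ reduces the claim to $b_i(G) \le d_i(G)$. The plan is to produce, from any minimum dominating $i$-clique $\mathcal{D}$ of $G$, a dominating $i$-clique of $T$ with $|\mathcal{D}|$ elements, by taking (a suitable modification of) the image $\alpha_\sigma(\mathcal{D})$. Its cardinality is preserved because $\alpha_\sigma$ is a bijection on $i$-cliques, as established in the proof of Theorem \ref{samecliquevector}. Fixing a maximal clique $C'$ of $T$ with $|C'| \ge i$, the threshold structure forces either $C' = K_d$ or $C' = \{v\}\cup\{x_1, \ldots, x_{n_\sigma(v)}\}$ for some $v \in V(G) \setminus K_d$. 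When $C' = K_d$, the assertion is immediate, since $K_d$ is maximal in $G$ and $\alpha_\sigma$ acts as the identity on subsets of $K_d$. When $C'$ comes from $v$, the unique $|C'|$-clique of $G$ mapping to $C'$ is $M_v = \{v\} \cup N_\sigma(v)$; extending $M_v$ to a maximal clique $M$ of $G$ and applying the domination property of $\mathcal{D}$ yields some $D \in \mathcal{D}$ with $D \subseteq M$. If $v \in D$, writing $D = \{v\} \cup D'$ with $D'$ at positions $j_1 < \cdots < j_{i-1}$ inside $N_\sigma(v)$, Definition \ref{combishiftingraph} gives $\alpha_\sigma(D) = \{v, x_{j_1}, \ldots, x_{j_{i-1}}\} \subseteq C'$ automatically. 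If $v \notin D$, Lemma \ref{K_d}---especially the positioning of $K_d$ at the tail of $\sigma$ and the monotone behavior of $N_\sigma$---lets one exchange $D$ within $\mathcal{D}$ for an $i$-subset of $M$ that does contain $v$, preserving both $|\mathcal{D}|$ and the dominating property of $\mathcal{D}$ in $G$.

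The main obstacle is precisely this exchange: one must simultaneously adjust $\mathcal{D}$ for every maximal clique $C'$ of $T$ while retaining domination of $G$. Carrying this out requires careful bookkeeping based on the positional constraints from Lemma \ref{K_d}, ensuring that swapping one element of $\mathcal{D}$ for another does not leave any maximal clique of $G$ uncovered.
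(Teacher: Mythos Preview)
Your argument for (b) is correct and takes a genuinely different route from the paper. The paper proves both parts via a single explicit map $\varphi:\mathcal{D}_i(G)\to\mathcal{D}_i(T)$ built from the PEO structure of Lemma~\ref{K_d}, shown surjective for (a) and injective for (b) when $i>\widetilde{\kappa}(G)$. Your approach for (b) bypasses $\varphi$ entirely: the observation that every $k$-clique with $k\ge i>\widetilde{\kappa}(G)$ lies in a unique maximal clique gives $c_k(G)=\sum_{|M|\ge k}\binom{|M|}{k}$, and the binomial inversion then yields $b_i(G)=|\{M\text{ maximal}:|M|\ge i\}|=d_i(G)$ directly. This is cleaner for (b) alone and delivers the formula $b_i(G)=\sum_{j\ge i}|\mathcal{C}_j(G)|$ explicitly, something the paper only extracts later in the proof of Corollary~\ref{b_i}(c).

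For (a), however, there is a genuine gap, and it is slightly worse than you acknowledge. First, even in the ``$v\in D$'' case you are tacitly assuming that $v$ is the $\sigma$-least vertex of $D$: the description $\alpha_\sigma(D)=\{v,x_{j_1},\ldots,x_{j_{i-1}}\}$ requires $D\setminus\{v\}\subseteq N_\sigma(v)$, but the maximal clique $M\supseteq M_v$ may well contain vertices $\sigma$-earlier than $v$, and $D$ may pick them up. Second, the exchange step you flag as the ``main obstacle'' is not mere bookkeeping but the heart of the matter, and nothing in Lemma~\ref{K_d} licenses swapping $D$ for another $i$-subset of $M$ containing $v$ while preserving domination of \emph{all other} maximal cliques of $G$. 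The paper sidesteps this completely: instead of modifying $\mathcal{D}$ and then pushing through $\alpha_\sigma$, it sends each $C_i\in\mathcal{D}_i(G)$ either to $\{x_1,\ldots,x_i\}$ or to $\{x_1,\ldots,x_{i-1},u_C\}$, where $u_C$ is the unique vertex of $C\setminus s(C)$ with $n_\sigma(u_C)=i-1$ supplied by Lemma~\ref{K_d}(c) for some maximal $C\supseteq C_i$ with $|s(C)|<i$. Well-definedness (independence of the choice of $C$) is handled by Lemma~\ref{K_d}(b),(d), and surjectivity onto the unique minimum dominating set $\mathcal{C}_i(T)\cup\{C_i^s\}$ of $T$ follows because every maximal $i$-clique of $T$ is $\{x_1,\ldots,x_{i-1},u\}$ with $n_\sigma(u)=i-1$, and $\{u\}\cup N_\sigma(u)$ furnishes a preimage.
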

\begin{proof}By Definition \ref{combishiftingraph},
  $K_d=\{x_1,x_2\ldots,x_d\}$ is a maximum clique of $G$ with $\sigma^{-1}(x_j)=n-j+1$ for every $j=1,\ldots,d$.
\begin{itemize}
\item[(a)] We note that the claim  holds for $i=1$.
 Let $\mathcal{D}_i(G)$ be a minimum dominating $i$--clique in $G$ for some $i\in \{2,3,\ldots,d\}$. 
Let $C_i\in  \mathcal{D}_i(G)$. Suppose that there exits a maximal clique $C$ containing $C_i$ with $i>|s(C)|$ Then, by Lemma \ref{K_d}(c),  there exists a unique vertex,  $u_C$, in $C\setminus s(C)$ such that $n_{\sigma}(u_C)=i-1$.
We define a map $\varphi$ from $\mathcal{D}_i(G)$ to $\mathcal{D}_i(T)$ as follows. For $C_i\in \mathcal{D}_i(G)$, we set
\begin{itemize}
  \item [(i)] If for every maximal clique $C$ of $G$ containing $C_i$ we have $i\le |s(C)|$, we set $\varphi(C_i)=\{x_1,x_2,\ldots,x_i\}$.
    \item [(ii)]  If there exists a maximal clique $C$ containing $C_i$ such that $i>|s(C)|$, we set $\varphi(C_i)=\{x_1,x_2,\ldots,x_{i-1},u_C\}$.
\end{itemize} 
First, we  show that $\varphi$ is a map from $\mathcal{D}_i(G)$ to $\mathcal{D}_i(T)$.
 Suppose that (ii) occurs and that there is a maximal clique $C'$, different from $C$, containing $C_i$ and such that $i>|s(C')|$. By Lemma \ref{K_d}(c),  there exists a unique vertex $u_{C'}\in C'\setminus s(C')$ such that $n_{\sigma}(u_{C'})=i-1$. We prove that $u_C=u_{C'}$. By Lemma \ref{K_d}(d), we may assume without loss of generality that $\sigma^{-1}(v)<\sigma^{-1}(u)$ for every $v\in C\setminus C'$ and every $u\in C\cap C'$. Then, by Lemma \ref{K_d}(b), we have that $s(C)\subset C\cap C'$. If $C=\{u_1,u_2,\ldots,u_{|C|}\}$ is well ordered, it follows that $u_j\in C\cap C'$ for every $j=|C|-|C\cap C'|+1,|C|-|C\cap C'|+2,\ldots,|C|$. Moreover, since $u_j\not\in s(C)$ for every $j=1,\ldots,|C|-|s(C)|$ by Lemma \ref{K_d}(b), it follows that
  $n_{\sigma}(u_j)=|C|-j$ for every $j=1,\ldots,|C|-|s(C)|$. Since $|C\cap C'|\ge i>
|s(C)|$, we have that $|C|-i+1\in \{1,\ldots,|C|-|s(C)|\}$. We conclude that $n_{\sigma}(u_{|C|-i+1})=i-1$. As $u_{|C|-i+1}\in C\cap C'$, it follows that $u_{|C|-i+1}=u_C=u_{C'}$, because $u_C$ and $u_{C'}$ are unique in $C$ and $C'$ respectively with the property   $n_{\sigma}(u_C)=n_{\sigma}(u_{C'})=i-1$. Therefore, $\varphi$ is a map from $\mathcal{D}_i(G)$ to $\mathcal{D}_i(T)$.

Second,  we  show that $\varphi$ is surjective proving that $d_i(T)\le d_i(G)$ for every $i=2,3,\ldots,d$. By Observation \ref{obs}(e), there is only one minimum dominating $i$--clique in $T$ which is formed by $\mathcal{C}_i(T)$, the set of maximal $i$--cliques of $T$, and the $i$--clique $\{x_1,x_2,\ldots,x_i\}$. 
We have that $\{x_1,x_2,\ldots,x_i\}\in \varphi(\mathcal{D}_i(G))$, since there is an $i$--clique $C_i\in \mathcal{D}_i(G)$ such that $C_i\subseteq K_d$. We note that a maximal $i$--clique in $T$ is a clique $\{x_1,x_2,\ldots,x_{i-1},u\}$ where $u\in V(T)\setminus K_d$. We now show that $\{x_1,x_2,\ldots,x_{i-1},u\}\in \varphi(\mathcal{D}_i(G))$.
By the definition of $\alpha_{\sigma}$, we have that $n_{\sigma}(u)=i-1$. We consider a maximal clique $C$ containing
 the $i$--clique $u\cup N_{\sigma}(u)$ in $G$, which means that $u\not\in s(C)$. Thus, it follows   that $s(C)\subset N_{\sigma}(u)$ by Lemma \ref{K_d}(b). As  $|N_{\sigma}(u)|=i-1$, we have that $i>|s(C)|$. We conclude that  $\varphi(u\cup N_{\sigma}(u))=\{x_1,x_2,\ldots,x_{i-1},u\}$, which proves that  $\varphi$ is surjective. Therefore $d_i(T)\le d_i(G)$ for every $i=1,\ldots,d$.

 \item[(b)] Suppose that $\mathcal{D}_i(G)=\{C_1,C_2,\ldots, C_{d_i(G)}\}$.  Let $M_j$ be a maximal clique of $G$ containing $C_j$ for every $j=1,\ldots,d_i(G)$. 
  We have that
 $C_j$ nor $C_r$ are contained in $M_j\cap M_r$ for every distinct $j,r=1,\ldots,d_i(G)$, because $i>\widetilde{\kappa}(G)$.
 As a consequence, $M_j\neq M_r$. By definition of $s(M_j)$, we have that $\widetilde{\kappa}(G)\ge s(M_j)$ concluding that  $i>s(M_j)$. Then, by Lemma \ref{K_d}(c), there exists a unique vertex $u_j\in M_j\setminus s(M_j)$ such that $n_{\sigma}(u_j)=i-1$ for every $j=1,\ldots,d_i(G)$.

We  prove that the map $\varphi$ is injective for $i> \widetilde{\kappa}(G)$. By definition of  $\varphi$, we have that $\varphi(C_j)=\{x_1,x_2,\ldots,x_{i-1},u_j\}$ and $\varphi(C_r)=\{x_1,x_2,\ldots,x_{i-1},u_r\}$ for every distinct $j,r=1,\ldots,d_i(G)$.
We show that $u_j\neq u_r$ proving that $\varphi(C_j)\neq \varphi(C_r)$. We have $u_j\neq u_r$ if $M_j\cap M_r=\emptyset$.
 Suppose $M_j\cap M_r\neq\emptyset$. As $M_j\neq M_r$, we may suppose without loss of generality   that $\sigma^{-1}(u)<\sigma^{-1}(v)$ for every $u\in M_j\setminus M_r$ and every $v\in M_j\cap M_r$ by  Lemma \ref{K_d}(d). Since $i>|M_j\cap M_r|$ and   $n_{\sigma}(u_j)=i-1$,
 it follows that $u_j\in M_j\setminus M_r$. Then, $u_j\neq u_r$, and so, $\varphi$ is injective. Hence, $d_i(G)=d_i(T)$ for every $i> \widetilde{\kappa}(G)$.
 \end{itemize}
\end{proof}

\subsection{$b$--vectors and betti numbers of chordal graphs}

In this subsection we show our main result regarding the $b$--vector of a chordal graph.
In particular, we compute some of the entries of the $b$--vector. We first compute some entries using the combinatorial procedure $\alpha_\sigma$ introduced
in the previous subsection.

\begin{corollary}\label{b_i}
  Let $G$ be a chordal graph, then
  \begin{itemize}
    \item [(a)] $b_i\le d_i(G)$ for every $i=1,\ldots,d$;
    \item [(b)] $b_i=d_i(G)$ for every $i>\widetilde{\kappa}(G)$;
    \item [(c)] $b_i\le b_j$  for every $i,j>\widetilde{\kappa}(G)$ with $j<i$.
  \end{itemize}
\end{corollary}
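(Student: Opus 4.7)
The plan is to reduce each of (a), (b), (c) to results already proved for the threshold graph $T=\alpha_\sigma(G)$ of Definition \ref{combishiftingraph}, exploiting the crucial fact (Theorem \ref{samecliquevector}) that $G$ and $T$ share the same $b$-vector.

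For (a), I would assemble the chain
\[
b_i(G)\;=\;b_i(T)\;=\;d_i(T)\;\le\;d_i(G),
\]
whose steps come from Theorem \ref{samecliquevector}, from Proposition \ref{dominate}(c) (applicable because $T$ is threshold), and from Theorem \ref{D_i}(a), respectively. Part (b) follows from the same chain, with the last inequality strengthened to the equality $d_i(T)=d_i(G)$ supplied by Theorem \ref{D_i}(b) whenever $i>\widetilde{\kappa}(G)$.

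For (c), it suffices by (b) to show that $d_i(G)$ is non-increasing in $i$ on the range $i>\widetilde{\kappa}(G)$. The key combinatorial observation is that for such $i$, every $i$-clique $A$ of $G$ lies in a unique maximal clique: if $A$ were contained in two distinct maximal cliques $C_1$ and $C_2$ of $G$, then $A\subseteq C_1\cap C_2$ would force $i=|A|\le\widetilde{\kappa}(G)$, a contradiction. Hence each $i$-clique dominates exactly one maximal clique, and a minimum dominating $i$-clique is obtained by picking a single $i$-subset from each maximal clique of size at least $i$; this gives
\[
d_i(G)\;=\;\#\{C\;|\;C\text{ is a maximal clique of }G\text{ with }|C|\ge i\},
\]
which is manifestly non-increasing in $i$. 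Combined with (b), this yields (c).

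The main technical heavy lifting has already taken place in constructing $\alpha_\sigma$ and establishing Theorem \ref{D_i}; granted those, this corollary is a short, essentially formal consequence. The only genuinely new argument needed here is the uniqueness observation above that identifies $d_i(G)$ with the count of maximal cliques of size at least $i$ once $i$ exceeds $\widetilde{\kappa}(G)$, and I expect no obstacle beyond carefully chaining together the quoted results.
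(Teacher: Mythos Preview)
Your proof is correct and follows essentially the same route as the paper: parts (a) and (b) are obtained exactly as you do by combining Theorem~\ref{samecliquevector}, Proposition~\ref{dominate}(c), and Theorem~\ref{D_i}, and for (c) the paper likewise identifies $d_i(G)$ with $|\mathcal{C}_{\ge i}|$, the number of maximal cliques of size at least $i$, when $i>\widetilde{\kappa}(G)$, and then uses that this count is non-increasing in $i$. Your uniqueness argument for why $d_i(G)=|\mathcal{C}_{\ge i}|$ in this range is the natural justification of a step the paper simply asserts.
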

\begin{proof}
By Theorem \ref{D_i} and Proposition  \ref{dominate}(c), we have (a) and (b). Let $\mathcal{C}_{\ge i}$ be the set of all maximal cliques of $G$ with cardinality at least $i$ and notice that $|\mathcal{C}_{\ge i}|=d_i(G)$ when  $i\ge\widetilde{\kappa}(G)+1$. Hence by (b) we have that $|\mathcal{C}_{\ge i}|=b_i$ for every  $i\ge\widetilde{\kappa}(G)+1$. As  $|\mathcal{C}_{\ge i}|\le|\mathcal{C}_{\ge j}|$ for every $j<i$, we have (c).
\end{proof}
The next example shows that  Theorem \ref{D_i}(b) and Corollary \ref{b_i}(b)and (c) are best possible.
\begin{example}\label{bestpossible}
For every positive integers $\kappa,\widetilde{\kappa}$ with $\kappa\le \widetilde{\kappa}$, consider the graph $G$ conformed by a clique $K=\{x_1,x_2,\ldots,x_{2\widetilde{\kappa}}\}$ of size $2\widetilde{\kappa}$ together with three additional  vertices, $u_{\kappa},u_{\widetilde{\kappa}}$ and $v_{\widetilde{\kappa}}$ such that $N(u_{\kappa})=\{x_1,x_2,\ldots,x_{\kappa}\}$, $N(u_{\widetilde{\kappa}})=\{x_1,x_2,\ldots,x_{\widetilde{\kappa}}\}$ and $N(v_j)=\{x_{\widetilde{\kappa}+1},x_{\widetilde{\kappa}+2},\ldots,x_{2\widetilde{\kappa}}\}$.

We observe that the graph $G$ of Example \ref{bestpossible} is chordal, $\kappa=\kappa(G)$ and $\widetilde{\kappa}=\widetilde{\kappa}(G)$. In addition,
 $b_i=d_i(T)$ for every $i=1,\ldots,d$ where $T$ is the threshold graph such that $\alpha_{\sigma}(G)=T$  by  Proposition \ref{dominate}(c) and Theorem \ref{samecliquevector}. Now, let us consider $i\le \widetilde{\kappa}$. We first notice that $b_i=2<3=d_i(G)$ for  $i=\kappa+1$ (which means $\kappa<\widetilde{\kappa}$ since $i\le \widetilde{\kappa}$). Second, we notice that  $b_i=1<2=d_i(G)$ for $i\neq\kappa+1$. In any case $b_i<d_i(G)$  if $i\le \widetilde{\kappa}$, showing that   Theorem \ref{D_i}(b) and Corollary \ref{b_i}(b) are best possible. Finally, if $i=\kappa=\widetilde{\kappa}$, we have that
$b_i=1<4=b_{i+1}$ showing that  Corollary \ref{b_i}(c) is best possible.
\end{example}

We now study the relation of the $b$--vector of a chordal graph $G$ with  the Betti numbers of the square-free monomial
ideal associated to the clique simplicial complex $\Delta(G)$. We now show that the Betti numbers of $R/I_{\Delta(G)}$ can be expressed by a formula in terms of  its $b$--vector when $G$ is a chordal graph.
\begin{remark}\label{relacion c vs b}
The relation between the $c$--vector and the $b$--vector (given indirectly by  Formula \ref{eq1}) can be written explicitly via the formula $c_i=\sum\limits_{j=i}^{d}\binom{j-1}{j-i}b_j$.
\end{remark}

\begin{proposition}\label{beta y b}
Let $G$ be a chordal graph, 
 then the Betti numbers of $R/I_{\Delta(G)}$ can be expressed by a formula in terms of  its $b$--vector. 
\end{proposition}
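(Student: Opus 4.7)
The plan is to chain together four facts already assembled in the paper: Fröberg's theorem on linearity for chordal flag complexes, Theorem \ref{ThmBettiH}, the $h$-vector/$f$-vector conversion in Remark \ref{h-vector f-vector}, and the $c$-vector/$b$-vector conversion in Remark \ref{relacion c vs b}. Since each step is an explicit linear change of coordinates, composing them produces the desired explicit formula.

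First, I would invoke Fröberg's theorem, which guarantees that $I_{\Delta(G)}$ has a $2$-linear resolution whenever $G$ is chordal. This is the hypothesis needed to apply Theorem \ref{ThmBettiH} with $t=2$, yielding
\[
\beta_{i}(R/I_{\Delta(G)}) = \sum_{\ell=0}^{i+2}(-1)^{\ell+i+1}\,h_{i+2-\ell}\binom{n-d}{\ell},
\]
where $(h_0,\dots,h_d)$ is the $h$-vector of $\Delta(G)$. This expresses every Betti number as a $\mathbb{Z}$-linear combination of the $h_j$.

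Next, I would use Remark \ref{h-vector f-vector} to rewrite each $h_j$ as a $\mathbb{Z}$-linear combination of the entries $f_{i-1}$ of the $f$-vector of $\Delta(G)$, namely $h_j=\sum_{i=0}^{j}(-1)^{j-i}\binom{d-i}{j-i}f_{i-1}$. Since $\Delta(G)$ is the clique complex of $G$, we have $f_{i-1}=c_i$ for $i\geq 1$ (and $f_{-1}=1$). Then Remark \ref{relacion c vs b} substitutes $c_i=\sum_{j=i}^{d}\binom{j-1}{j-i}b_j$, so each $h_j$ becomes an explicit $\mathbb{Z}$-linear combination of the $b_j$. Chaining these substitutions into the display above gives a closed formula
\[
\beta_{i}(R/I_{\Delta(G)}) = \sum_{j=1}^{d} A_{i,j}\, b_{j},
\]
with coefficients $A_{i,j}$ that are explicit finite sums of products of binomial coefficients depending only on $n$, $d$, $i$, and $j$.

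There is essentially no obstacle: the whole argument is a bookkeeping exercise in composing three linear transformations, with the only non-formal input being Fröberg's theorem (to license the use of Theorem \ref{ThmBettiH}). The only mild care needed is to keep track of the indexing convention $f_{i-1}=c_i$ between simplicial $f$-vectors and clique $c$-vectors, and to note that the formula is valid for all $i$ because outside the range of nonzero $\beta_i$ the right-hand side will automatically vanish. I would close the proof by displaying the final substituted formula, emphasizing that it realizes $\beta_i(R/I_{\Delta(G)})$ as an explicit integer linear functional of the $b$-vector.
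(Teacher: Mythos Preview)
Your proposal is correct and follows essentially the same approach as the paper: both invoke Fr\"oberg's theorem to obtain a $2$-linear resolution, apply Theorem~\ref{ThmBettiH} to express $\beta_i$ via the $h$-vector, and then compose the linear changes of coordinates from Remarks~\ref{h-vector f-vector} and~\ref{relacion c vs b} (using $f_{i-1}=c_i$) to arrive at an explicit formula in the $b_j$. The only difference is the order of exposition, which is immaterial.
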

\begin{proof}
Since the $f$--vector and the clique vector of $\Delta(G)$ have the relation  $f_{i-1}=c_{i}$ for every $i=1,\ldots,d$ by Remarks \ref{h-vector f-vector} and \ref{relacion c vs b},  the $h$--vector and the $b$--vector of $\Delta(G)$ have the following relation
\begin{equation}\label{h y b}
h_i=\sum\limits_{j=0}^{i}(-1)^{i-j}\binom{d-j}{i-j}\Bigg( \sum\limits_{k=j}^{d}\binom{k-1}{k-j}b_k\Bigg)
\end{equation}
for every $i=0,1,\ldots,d$. Since $G$ is chordal, $I_{\Delta(G)}$ has a $2$-linear resolution \cite[Theorem 1]{Fro}, 
 we have that
 \begin{equation}\label{beta y h}
 \beta_i(R/I_{\Delta(G)})=\sum\limits_{\ell=0}^{2+i}(-1)^{\ell+i+1}h_{2+i-\ell}\binom{n-d}{\ell}
 \end{equation}
 for every $0\le i\le p$  by Theorem \ref{ThmBettiH}. Here, $p$ denotes the projective dimension of the Stanley-Reisner ring $R$. Therefore, by combining Equations (\ref{h y b}) and (\ref{beta y h}), we  obtain a formula of the Betti numbers $\beta_i(R/I_{\Delta(G)})$ in terms of its $b$--vector. Precisely $$\beta_i(R/I_{\Delta(G)})=\sum\limits_{\ell=0}^{2+i}(-1)^{\ell+i+1}\Bigg(\sum\limits_{j=0}^{2+i-\ell}(-1)^{2+i-\ell-j}\binom{d-j}{2+i-\ell-j}
\Bigg(\sum\limits_{k=j}^{d}\binom{k-1}{k-j}b_k\Bigg)
\Bigg)\binom{n-d}{\ell}$$  for every $0\le i\le p$.
\end{proof}

We now extend a theorem of Goodarzi's \cite[Theorem 1]{Goodarzi} regarding $b$--vector and connectivity of $G$. In particular, we describe one more entry in terms of connectivity components.

\begin{theorem}\label{k+1}
  Let $G$ be a chordal graph of order $n$ and vertex connectivity $\kappa$. 
Then, $$b_{i}=
\sum\limits_{|Y|=i-1} (W(G-Y)-1)+1=\beta_{n-i}(R/I_{\Delta(G)})+1$$ for every $i=1,\ldots,\kappa+1$ where $Y\subseteq V(G)$. In addition,
$$b_{i}<
\sum\limits_{|Y|=i-1} (W(G-Y)-1)+1=\beta_{n-i}(R/I_{\Delta(G)})+1$$ for every $i=\kappa+2,\ldots, d$.
\end{theorem}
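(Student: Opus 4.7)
The overall strategy is to push the statement through the combinatorial shifting $T = \alpha_\sigma(G)$ of Definition \ref{combishiftingraph}, where the analogous statements are already available in Propositions \ref{dominate} and \ref{k+2} for threshold graphs, and then transfer the information back to $G$ via the shifting theory developed in Section 4. First I would dispose of the rightmost equality $\sum_{|Y|=i-1}(W(G-Y)-1)+1 = \beta_{n-i}(R/I_{\Delta(G)})+1$: because $G$ is chordal, $I_{\Delta(G)}$ admits a $2$--linear resolution, so $\beta_{n-i}(R/I_{\Delta(G)}) = \beta_{n-i,\,n-i+1}(R/I_{\Delta(G)})$, and identifying this quantity with $\sum_{|Y|=i-1}(W(G-Y)-1)$ is exactly the content of Proposition \ref{beta} after the change of variable $j=n-i$. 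The range $i \le \kappa$ is then immediate: the sum vanishes because $G$ is $\kappa$--connected, while $b_i(G) = b_i(T) = 1$ by Theorem \ref{samecliquevector}, Corollary \ref{corocombishift} and Proposition \ref{dominate}(a).

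For the boundary case $i = \kappa+1$, I would again invoke Theorem \ref{samecliquevector} and Corollary \ref{corocombishift} to replace $b_{\kappa+1}(G)$ by $b_{\kappa+1}(T)$, where $T$ has the same connectivity $\kappa$. Observation \ref{obs}(c) says that $T$ has a unique minimum vertex-cut $Y_T$, so $\sum_{|Y|=\kappa}(W(T-Y)-1) = W(T - Y_T) - 1$, while Proposition \ref{dominate}(d) identifies $b_{\kappa+1}(T) = W(T - Y_T)$; together these give $b_{\kappa+1}(T) = \sum_{|Y|=\kappa}(W(T-Y)-1) + 1$. It then remains to equate the sums over $G$ and $T$: applying Proposition \ref{beta} to each graph (both have $2$--linear resolutions), Proposition \ref{PropPreservationBetta} yields $\beta_{n-i}(R/I_{\Delta(G)}) = \beta_{n-i}(R/I_{\Delta(G)^e}) = \beta_{n-i}(R/I_{\Delta(T)})$, where the last equality is Corollary \ref{corocombishift}. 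For the strict inequality when $i \ge \kappa+2$, I would apply Proposition \ref{k+2} to $T$ to obtain $b_i(T) < \sum_{|Y|=i-1}(W(T-Y)-1)$, and combine $b_i(G) = b_i(T)$ with the same Betti-number identity to deduce $b_i(G) < \sum_{|Y|=i-1}(W(G-Y)-1) \le \sum_{|Y|=i-1}(W(G-Y)-1)+1$.

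The main obstacle is this Betti-number transfer step, which upgrades Corollary \ref{CorConnectivity} (stated only at $|Y| = \kappa$) to every cardinality $|Y| = i-1$. The key point is that the chordal hypothesis on $G$ not only guarantees the $2$--linear resolution of $I_{\Delta(G)}$, but also, via Theorem \ref{samecliquevector} and Corollary \ref{corocombishift}, forces $\Delta(T) = \Delta(G)^e$ to be the clique complex of a threshold graph, so the $2$--linearity hypothesis of Proposition \ref{PropPreservationBetta} applies on both sides. This is precisely what converts the graph-theoretic equality/inequality for $T$ into one for $G$; without it we would only have the coarse invariants $\kappa$ and $\Depth$ preserved under shifting, which is not enough to control the individual terms $W(G-Y)$ appearing in the sums.
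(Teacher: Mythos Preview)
Your proposal is correct and follows essentially the same route as the paper: reduce to the threshold graph $T=\alpha_\sigma(G)$ via Theorem~\ref{samecliquevector} and Corollary~\ref{corocombishift}, invoke Propositions~\ref{dominate}(d) and~\ref{k+2} on $T$, and transfer the connectivity sums back to $G$ through the Betti-number identity coming from Proposition~\ref{beta} and Proposition~\ref{PropPreservationBetta}. The only cosmetic differences are that the paper cites Goodarzi directly for $b_1=\cdots=b_\kappa=1$ rather than going through $T$, and that you make the Betti transfer for all $i$ (not just $i=\kappa+1$) explicit, whereas the paper leaves this implicit in the phrase ``it suffices to prove our claims for $T$.''
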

\begin{proof}
By Proposition \ref{beta}, we have that
$$
\beta_{n-i,n-i+1}(R/I_{\Delta(G)})=\sum_{|Y|=i-1} (W(G-Y)-1)+1=0
$$
for every $i=1,\ldots,\kappa$. Since we already know that $b_1=\ldots=b_\kappa=1$ from Goodarzi's work \cite[Theorem 1]{Goodarzi}, the theorem holds for every $i=1,\ldots,\kappa$.

We now focus for the case $i=\kappa+1.$
Let $T=\alpha_\sigma(G)$ as in Definition \ref{combishiftingraph}, then   $\Delta(T)=\Delta(G)^e$ and $\kappa(T)=\kappa(G)$ by Corollary \ref{corocombishift}.
We have that $b_{\kappa+1}(G) =b_{\kappa+1}(T)$ by Theorem \ref{samecliquevector}. In addition,
 \begin{align*}
\sum_{|Y|=\kappa} (W(G-Y)-1)
&= \beta_{n-i,n-i+1}(R/I_{\Delta(G)}) \hbox{ by Proposition \ref{beta}}\\
&=\beta_{n-i}(R/I_{\Delta(G)}) \hbox{ because }I_{\Delta(G)}\hbox{ has a }2\hbox{-linear resolution.}\\
&=\beta_{n-i}(R/I_{\Delta(T)}) \hbox{ by Proposition  \ref{PropPreservationBetta}}\\
& =\beta_{n-i,n-i+1}(R/I_{\Delta(T)}) \hbox{ because }I_{\Delta(G)}\hbox{ has a }2\hbox{-linear resolution.}\\
&=\sum_{|Y|=\kappa} (W(G-Y)-1) \hbox{ by Proposition \ref{beta}}.
\end{align*}
Then, it suffices to prove our claims for $T$, which were proved in Propositions \ref{dominate}(d) and \ref{k+2} respectively.

\end{proof}

The $b$--vector 
 of $G$ in Figure \ref{chordalthreshold} is $(1,4,3,2)$ and $\kappa(T)=\kappa(G)=\widetilde{\kappa}(T)=\widetilde{\kappa}(G)=1$. We observe that effectively $b_1=1$ and $b_i=d_i(G)$ for every $i\ge 2$ (Corollary \ref{b_i}(b)). Figure \ref{chordalthreshold2} shows another chordal graph $G$ and its graph $\alpha_{\sigma}(G)=T$. We observe that the $b$--vector of $G$ in Figure \ref{chordalthreshold2} is $(1,2,3,2,2)$ and $\kappa(T)=\kappa(G)=1<2=\widetilde{\kappa}(G)<\widetilde{\kappa}(T)=4$. Hence, $b_1=1$ and $b_i=d_i(G)$ for every $i>2=\widetilde{\kappa}(G)$ (Corollary \ref{b_i}(b)). Finally, we observe that edges $(4,10)$ and $(9,10)$ of $T$ are a minimum  dominating $2$--clique in $T$ and edges $(3,5)$, $(4,5)$ and $(6,7)$ are a minimum  dominating $2$--clique in $G$. Hence, $b_2=d_2(T)=2<3=d_2(G)$. Nevertheless, we know that $b_2$ give us another information in  $G$, because  $b_2=\sum\limits_{|Y|=1} (W(G-Y)-1)+1=2$  by Theorem \ref{k+1} .
\begin{figure}[htb]
\begin{center}
 \includegraphics[width=.9\textwidth]{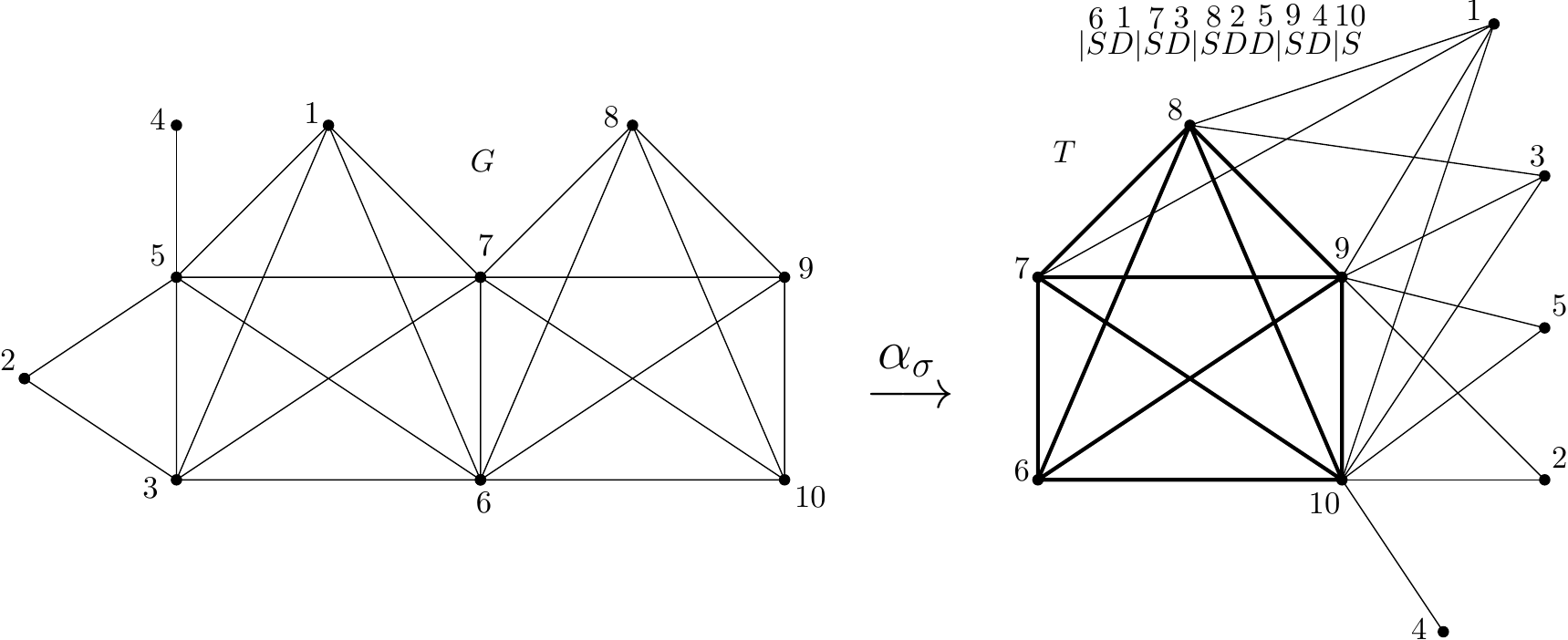}
\caption{ \label{chordalthreshold2}}
\end{center}
\end{figure}

\subsection{Examples}
We finish this section by providing examples of pure and matroid complexes arising from chordal and threshold graphs in which we know the value of its $b$--vector.
A simplicial complex $\Delta$ is \emph{pure} if all its facets have the same cardinality and a \emph{matroid complex} $\Delta_V$ on a set of vertices $V$ is a pure simplicial complex such that $\Delta_{V\setminus S}$ is pure for every $S\subseteq V$.

Klivans \cite{Carly}  proved that given a graph $T$, $\Delta(T)$ is a shifted  matroid complex if and only if $T$ is threshold and its corresponding word is of the form $SDDDSSS$. The first example shows  that the above is true for pure complexes arising from threshold graphs.

\begin{example} \label{examplethresholdpure}
Let $T$ be a threshold graph 
 and suppose that $\Delta (T)$ is pure.
Note that there is no maximal clique of size $i$ in $T$ for $i\le d-1$. Then, by Proposition  \ref{dominate}(i)(ii), $b_i=1$ for $i\le d-1$.  Since $\Delta(T)$ is pure the word corresponding to $T$ is of the form $SDDDSSS$,  $T-S$ is pure for every $S\subset V(G)$. This is because the corresponding word of $T-S$ is has the same type. Hence, $\Delta (T)$ is a matroid complex.
\end{example}

\begin{example} 
Let $G$ be a chordal graph 
 and suppose that  $\Delta(T)$ is a matroid complex.
 We show that $G$ is in fact threshold. For this, it is enough to show that $G$ does not contain the graphs $C_4$, $2K_2$ or $P_4$ as an induced subgraph  by Theorem \ref{Mahadev}.
First, we claim that for every vertex $v\in V(G)$ and every edge $u_1u_2\in E(G)$, we have that $vu_1\in E(G)$ or $vu_2\in E(G)$.
Let $v,u_1,u_2$ different vertices in $G$ such that $u_1u_2\in E(G)$ and let $S=V(G)-\{v,u_1,u_2\}$. As $\Delta (G)$ is a matroid complex, we have that $\Delta (G-S)$ is pure which means that $vu_1\in E(G)$ or $vu_2\in E(G)$.
Second, by the previous claim, $G$ does not contain the graphs $C_4$, $2K_2$ or $P_4$ as an induced subgraph. We obtain that  $G$ is threshold. Finally, we also notice that  $b_i=1$ for $i\le d-1$ by Example \ref{examplethresholdpure}.
\end{example}




The last example is when $G$ is a chordal graph and $\Delta(G)$ is pure. In this case,  the $b$--vector satisfies  $0<b_1\le b_2\le\cdots \le b_d $  \cite[Theorem 1.2]{H08}. Hence, from Corollary \ref{b_i}(c), we obtain the following.

\begin{example}
Let $G$ be a chordal graph with 
 $\Delta(G)$ pure. Then,  $b_{\widetilde{\kappa}(G)+1}=b_{\widetilde{\kappa}(G)+2}=\cdots=b_d$.
\end{example}


\bibliographystyle{alpha}
\bibliography{References}


\end{document}